\def\para#1{\vskip .4\baselineskip\noindent{\bf #1}}
\numberwithin{equation}{section}
\DeclareMathOperator*{\esssup}{ess\,sup}
\newcommand{\eps}{\varepsilon}
\newcommand{\F}{\mathcal{F}}
\renewcommand{\E}{\mathbb{E}}
\renewcommand{\N}{\mathbb{N}}
\newcommand{\0}{\mathcal{O}}
\newcommand{\PP}{\mathbb{P}}
\newcommand{\R}{\mathbb{R}}
\newcommand{\abs}[1]{\left\vert#1\right\vert}
\numberwithin{equation}{section}
\newcommand{\bed}{\begin{displaymath}}
\newcommand{\eed}{\end{displaymath}}
\newcommand{\bea}{\bed\begin{array}{rl}}
\newcommand{\eea}{\end{array}\eed}
\newcommand{\barray}{\begin{array}{ll}}
\newcommand{\earray}{\end{array}}
\def\disp{\displaystyle}
\newcommand{\1}{\boldsymbol{1}}
\def\bar{\overline}
\def\hat{\widehat}
\def\wdt{\widetilde}
\def\a.s{\text{\;a.s.\;}}
\shorttitle{An SPDE Epidemic Model}
\begin{document}

\title{Analysis of A Spatially Inhomogeneous Stochastic Partial Differential Equation Epidemic Model}

\authorone[University of Alabama]{Dang H. Nguyen}
\addressone{University of Alabama, Tuscaloosa, AL
35487, USA, dangnh.maths@gmail.com.}
\authortwo[Wayne State University]{Nhu N. Nguyen}
\addresstwo{Department of Mathematics, Wayne State University, Detroit, MI,
48202, USA, nhu.math.2611@gmail.com.}
\authorthree[Wayne State University]{George Yin}
\addressthree{Department of Mathematics, Wayne State University, Detroit, MI,
48202, USA, gyin@wayne.edu.}

\begin{abstract}
This work proposes and analyzes a family of spatially inhomogeneous epidemic models. This is our first effort to use stochastic partial differential equations (SPDEs) to model  epidemic dynamics with spatial variations and environmental noise. After setting up the problem, existence and uniqueness of solutions of the
 underlying SPDEs are examined. Then definitions of permanence and extinction are given. Certain sufficient conditions are provided for the permanence and extinction. Our hope is that this paper will open up windows for investigation of epidemic models from a new angle.
 \end{abstract}

 \keywords{SIR model, SPDE, mild solution, positivity, extinction, permanence.}

 \ams{60H15, 92D25, 92D30}{35Q92}

\section{Introduction}
This work presents an effort of studying stochastic epidemic models, in which spatial in-homogeneity is allowed. The hope is that it will open up a new angle for investigating a large class of epidemic processes. In lieu of the usual stochastic differential equation based formulation considered in the literature, we propose a new class of models by using stochastic partial differential equations.
This effort largely enriches the class of systems and offers great opportunities both mathematically and practically. Meanwhile, it poses greater challenges.

The epidemic models (compartment models), in which the density functions are spatially homogeneous were introduced  in 1927 by Kermack and McKendrick in \cite{Kermach, KM1}.
The main idea is to partition the population into
susceptible, infected, and recovered classes. The dynamics of these classes are given by a system of deterministic differential equations.
One of the classical models takes the form
\begin{equation*}
\begin{cases}
dS(t)=\Big[\Lambda-\mu_SS(t)-\dfrac{\alpha S(t)I(t)}{S(t)+I(t)}\Big]dt\quad t\geq 0,\\[1ex]
dI(t)=\Big[-(\mu_I+r)I(t)+\dfrac{\alpha S(t)I(t)}{S(t)+I(t)}\Big]dt\quad t\geq 0,\\[1ex]
dR(t)=\Big[-\mu_R R(t)+r I(t)\Big]dt\quad t \geq 0,\\
S(0)=S_0\geq 0,\quad I(0)=I_0\geq 0,\quad R(0)=R_0\geq 0,
\end{cases}
\end{equation*}
where $S(t)$, $I(t)$, $R(t)$ are the densities of susceptible, infected, and recovered populations, respectively.
In the above, $\Lambda$ is the recruitment rate of the population; $\mu_S,\mu_I,\mu_R$ are  the
death rates of susceptible, infected and recovered individuals, respectively;  $\alpha$ is
the infection rate and $r$ is the recovery rate.
To simplify the study, it has been noted that
the dynamics of recovered individuals have no effect on the disease transmission dynamics. Thus, following the usual practice,
the recovered individuals are removed from the formulation henceforth. The SIR models are known to be useful and suited for
such diseases as rubella, whooping cough, measles, smallpox, etc.
It has also been well recognized that random effect is  not avoidable and
a population is often
 subject to random disturbances.
Thus,  much effort has also been devoted to the investigation of
 stochastic epidemic models. One popular approach is adding stochastic noise perturbations to the above deterministic models.
In recent years, resurgent
attention has been devoted to
analyzing and designing controls of infectious diseases for host populations; see
\cite{All07,BS,Britton,DDN18, DN17, HDAN, GC,Kor, WZ,Wilkinson} and  references therein.

For the  deterministic models, studying the systems from a dynamic system point of view, certain  threshold-type results have been found. In accordance with the threshold,
  the population tends to the disease-free equilibrium or
  approaches an endemic equilibrium under certain conditions.
It has been a long-time effort to find
  the critical threshold value for the corresponding stochastic systems. A characterization of the systems using critical threshold was done very recently in \cite{DNDY16,DN18, HN18, HNY18}, in which sufficient and almost necessary conditions were obtained using the idea of Lyapunov exponent so that the asymptotic behavior of the system has been completely classified. Such idea can also be found in the work \cite{DNY16, NY17} for related problems.

From another angle,
it has been widely recognized that there should be spatial dependence in the model, which will better reflect the spacial variations.
In the spatially inhomogeneous case, the epidemic reaction-diffusion system takes the form
\begin{equation}
\begin{cases}
\disp {\partial \over \partial t}
S(t,x)=k_1\Delta S(t,x) +\Lambda(x)-\mu_1(x)S(t,x)-\dfrac{\alpha(x) S(t,x)I(t,x)}{S(t,x)+I(t,x)}\quad\text{in }\R^+\times\0,\\
\disp {\partial \over \partial t}
I(t,x)=k_2\Delta I(t,x)-\mu_2(x) I(t,x) + \dfrac{\alpha(x) S(t,x)I(t,x)}{S(t,x)+I(t,x)} \quad\text{in }\R^+\times\0,\\[1ex]
\partial_{\nu}S(t,x)=\partial_{\nu}I(t,x)=0\quad\quad\quad\quad\;\text{in }\R^+\times\partial\0,\\[1ex]
S(x,0)=S_0(x),I(x,0)=I_0(x)\quad\;\;\text{in }\0,
\end{cases}
\end{equation}
where $\Delta $ is the Laplacian with respect to the spatial variable, $\0$ is a bounded domain with $C^2$ boundary
of $\R^l$ ($l\geq 1$), $\partial_{\nu}S$ denotes the directional derivative with the $\nu$ being the outer normal direction on $\partial \0$, and $k_1$ and $k_2$ are positive constants representing the diffusion rates of the susceptible and infected population densities, respectively. In addition, $\Lambda(x),\mu_1(x),\mu_2(x),\alpha(x)\in C^2(\0)$ are non-negative functions.
Recently, the epidemic reaction-diffusion models have been studied in \cite{Allen,Ducrot,Peng2009,Peng,Zhang} and the references therein. In \cite{WZ}, some results were given for a general epidemic model with reaction-diffusion in terms of basic reproduction numbers.
The above models are all noise free.
However,  random noise perturbations in
the environment often inevitably appear.
Therefore,
a more suitable description requires to
 consider stochastic epidemic diffusive models.
 Taking this into consideration, we propose a spatially non-homogeneous model using a system of stochastic partial differential equations given by
\begin{equation}\label{eq}
\begin{cases}
dS(t,x)=\Big[k_1\Delta S(t,x) +\Lambda(x)-\mu_1(x)S(t,x)-\dfrac{\alpha(x) S(t,x)I(t,x)}{S(t,x)+I(t,x)}\Big]dt\\
\hspace{7.5cm}+ S(t,x)dW_1(t,x)\quad\text{in }\R^+\times\0,\\
dI(t,x)=\Big[k_2\Delta I(t,x)-\mu_2(x) I(t,x)+\dfrac{\alpha(x) S(t,x)I(t,x)}{S(t,x)+I(t,x)}\Big]dt
\\\hspace{7.5cm}+I(t,x)dW_2(t,x)\quad\text{in }\R^+\times\0,\\
\partial_{\nu}S(t,x)=\partial_{\nu}I(t,x)=0\quad\quad\quad\quad\quad\quad\quad\text{in }\R^+\times\partial\0,\\
S(x,0)=S_0(x),I(x,0)=I_0(x)\quad\quad\quad\quad\;\text{in }\0,
\end{cases}
\end{equation}
where $W_1(t,x)$ and $W_2(t,x)$ are $L^2(\0,\R)$-value Wiener processes, which present the noises in both time and space.
We refer the readers to \cite{Prato} for more details on the $L^2(\0,\R)$-value Winner process.

Because this is our first work in this direction, we have to settle a number of issues. First, we establish the existence and uniqueness of solutions in the sense of mild solution of the stochastic partial differential equations. Moreover, we examine some long-term behavior of the solutions.  These are the main objectives of the current work.

The rest of the paper is arranged as follows.
Section \ref{sec:pre} gives some preliminary results and also formulates the problem that we wish to study.
Section \ref{sec:exist} establishes the existence and uniqueness of the solution of the stochastic partial differential equations. Section \ref{sec:longtime} provides sufficient conditions for the extinction and permanence while Section \ref{example} provides an example. Finally, Section \ref{sec:con} concludes the paper with some further remarks.

\section{Preliminary and Formulation}\label{sec:pre}
Let $\0$ be a bounded domain in $\R^l$ (with $l\geq 1$) having $C^2$ boundary and $H:=L^2(\0;\R)$ be the separable Hilbert space, endowed
with
the scalar product
$$\langle  u, v \rangle_H:=\int_{\0} u(x) v(x)dx,$$
and the corresponding norm $\abs{ u}_H = \sqrt {\langle  u,  u \rangle_H}$. We will say $ u\geq 0$ if $ u(x)\geq 0$ almost everywhere in $\0$. Moreover, we denote by $L^2(\0,\R^2)$ the space of all functions $u(x)=\big(u_1(x),u_2(x)\big)$ where $ u_1,u_2\in L^2(\0,\R)$, on which
the inner product is defined as
\begin{align*}
\langle u,v \rangle_{L^2(\0,\R^2)}&:=\int_\0 \big\langle u(x),v(x) \big\rangle_{\R^2}dx=\int_{\0}\big(u_1(x)v_1(x)+u_2(x)v_2(x)\big)dx
\\&=\langle u_1,v_1 \rangle_{L^2(\0,\R)}+\langle u_2,v_2 \rangle_{L^2(\0,\R)},
\end{align*}
for all $u,v\in L^2(\0,\R^2)$.
Note that $L^2(\0,\R^2)$ is  a separable Hilbert space. In what follows, we use $u$ to denote a function that is either real-valued or an $\R^2$-valued. It will be clear from the context.
Denote by $E$ the Banach space $C(\bar \0;\R)$ endowed with the sup-norm
$$\abs{u}_E:=\sup_{x\in\bar \0}\abs{u(x)}.$$
Let $\big(\Omega, \F,\{\F_t\}_{t\geq 0},\PP\big)$ be a complete probability space and $L^p\big(\Omega;C([0,t],C(\bar \0,\R^2))\big)$ be the space of all predictable $C(\bar\0,\R^2)$-valued processes $u$ in $C\big([0,t],C(\bar \0,\R^2)\big)$, $\PP$-a.s. with the norm $L_{t,p} $ as follows
$$\abs{u}^p_{L_{t,p}}:=\E \sup_{s\in [0,t]}\abs{u(s)}^p_{C(\bar\0,\R^2)},$$
where
$$\abs{u}_{C(\bar\0,\R^2)}=\Big(\sum_{i=1}^2\sup_{x\in\bar \0}\abs{u_i(x)}^2\Big)^\frac 12\quad\text{if}\quad u=(u_1,u_2)\in C(\bar\0,\R^2).$$
For $\eps>0,p\geq 1$, denote by $W^{\eps,p}(\0,\R^2)$ the Sobolev-Slobodeckij space (the Sobolev space with non-integer exponent) endowed with the norm
$$\abs{u}_{\eps,p}:=\abs{u}_{L^p(\0,\R^2)}+\sum_{i=1}^2\int_{\0\times\0}\dfrac{\abs{u_i(x)-u_i(y)}^p}{\abs{x-y}^{\eps p+l}}dxdy.$$

Assume that $B_{k,1}(t)$ and $B_{k,2}(t)$ with $k=1,2,\dots$,
are independent $\{\F_t\}_{t\geq 0}$-adapted one-dimensional Wiener processes. Now, fix an orthonormal basis $\{e_k\}_{k=1}^{\infty}$ in $H$ and
assume
that this sequence
is uniformly bounded in $L^{\infty}(\0,\R)$, i.e.,
$$C_0:=\sup_{k\in\N}\abs{e_k}_{L^{\infty}(\0,\R)}=\sup_{k\in\N}\esssup_{x\in\0} e_k(x)<\infty.$$
We define the
 infinite dimensional Wiener processes $W_i(t)$,
 which are driving noises in
 equation \eqref{eq} as follows
$$\displaystyle W_i(t)=\sum_{k=1}^{\infty}\sqrt {a_{k,i}}B_{k,i}(t)e_k,\quad i=1,2,$$
where $\{a_{k,i}\}_{k=1}^{\infty}$ are sequences of non-negative real numbers satisfying
\begin{equation}\label{condition}
a_i:=\sum_{k=1}^{\infty}a_{k,i}<\infty,\quad i=1,2.
\end{equation}
Let $A_1$ and $A_2$ be Neumann realizations of $k_1\Delta$ and $k_2\Delta$ in $H$, respectively, i.e.,
$$D(A_i)=\Big\{u\in H\big| \Delta u\in H\;\text{and}\;\partial_\nu u=0\;\text{on}\;\partial \0\Big\},$$
$$A_i u=k_i\Delta u, \;\;u\in D(A_i),$$
where the Laplace operator in the above definition is understood in the distribution sense. Then, $A_1$ and $A_2$ are infinitesimal generators of analytic semi-groups $e^{tA_1}$ and $e^{tA_2}$ with corresponding Neumann heat kernels, denoted by $p_\0^{N,1}(t,x,y),p_\0^{N,2}(t,x,y)$, i.e.,
$$(e^{tA_i}u)(x)=\int_\0 p_\0^{N,i}(t,x,y)u(y)dy,\;i=1,2,$$
 respectively. In addition, if we denote $A:=(A_1,A_2)$, the operator defined in $L^2(\0,\R^2)$ by $Au:=(A_1u_1,A_2u_2)$ for $u=(u_1,u_2)\in L^2(\0,\R^2)$, then it generates an analytic semigroup $e^{tA}$ with $e^{tA}u=(e^{tA_1}u_1,e^{tA_2}u_2).$
In \cite[Theorem 1.4.1]{Davies}, it is proved that the space $L^1(\0,\R^2 )\cap L^\infty(\0,\R^2)$ is
invariant under $e^{tA}$, so that $e^{tA}$ may be extended to a non-negative one-parameter semigroup $e^{tA(p)}$ on $L^p(\0;R^2 )$, for all $1\leq p\leq\infty$.
All these semi-groups are strongly continuous and consistent in the sense that $e^{tA(p)}u=e^{tA(q)}u$ for any $u\in L^p(\0,\R^2)\cap L^q(\0,\R^2)$ (see \cite{Cerrai-book}). So, we will suppress
the superscript $p$ and denote them by $e^{tA}$ whenever there is no confusion.
Moreover, if we consider the part $A_i^E$ of $A_i$ in the space of continuous functions $E$, it generates an analytic semi-group (see \cite[Chapter 2]{Arendt}), which has no dense domain in general. However, since we have assumed that $\0$ has $C^2$ boundary, in our boundary condition, $A_i^E$ has dense domain in $E$ (see \cite[Appendix A.5.2]{Prato}) and hence, this analytic semi-group is strongly continuous. Finally, we recall some well-known properties of the operators $A_i$ and analytic semi-groups $e^{tA_i}$ for $i=1,2$ as follows. For further details,
we refer the reader to the monographs \cite{Arendt,Davies,Ouhabaz} and the references therein.
\begin{itemize}
\item $\forall u\in H$ then $\int_ 0^t e^{sA_i}uds\in D(A_i)$ and $A_i(\int_0^t e^{sA_i}uds)=e^{tA_i}u-u$.
\item
By Green's identity, it can be proved that $A_i$ is symmetric, that $A_i$ is self-adjoint in $H$,
and that $\forall u\in D(A_i)$, $\int_\0 (A_iu)(x)dx=0$.
\item For any $t>0,x,y\in\0$,
$$0\leq p_\0^{N,i}(t,x,y)\leq c_1 (t\wedge 1)^{-\frac l2}e^{-c_2\frac{\abs{x-y}^2}{t}},$$
for some constant $c_1,c_2$, which depends on $\0$, but is independent of $u,t$.

\item The semigroup $e^{tA}$ satisfies the fowling properties
\begin{equation}\label{propertyA2}
\begin{aligned}
\abs{e^{tA}u}_{L^\infty(\0,\R^2)}\leq c\abs{u}_{L^\infty(\0,\R^2)}\;\text{and}\;\abs{e^{tA}u}_{C(\bar\0,\R^2)}\leq c\abs{u}_{C(\bar\0,\R^2)},
\end{aligned}
\end{equation}
for some constant $c$, which depends on $\0$, but is independent of $u,t$.

\item For any $t,\eps>0$, $p\geq 1$, the semigroup $e^{tA}$ maps $L^p(\0,\R^2)$ into $W^{\eps,p}(\0,\R^2)$
 and $\forall u\in L^p(\0,\R^2)$
\begin{equation}\label{propertyA}
\abs{e^{tA}u}_{\eps,p}\leq c(t\wedge 1)^{-\eps/2}\abs{u}_{L^p(\0,\R^2)},
\end{equation}
for some constant $c$ independent of $u,t$.
\end{itemize}

Now, we rewrite equation \eqref{eq} as the stochastic differential equation in an infinite dimension space
\begin{equation}\label{eq1}
\begin{cases}
dS(t)=\Big[A_1 S(t) +\Lambda-\mu_1S(t)-\dfrac{\alpha S(t)I(t)}{S(t)+I(t)}\Big]dt + S(t)dW_1(t),\\
dI(t)=\Big[A_2 I(t)-\mu_2I(t) + \dfrac{\alpha S(t)I(t)}{S(t)+I(t)}\Big]dt+I(t)dW_2(t),\\
S(0)=S_0,I(0)=I_0.
\end{cases}
\end{equation}
As usual, we say that $\big(S(t),I(t)\big)$ is a mild solution to \eqref{eq1}, if
\begin{equation}\label{so}
\begin{cases}
\displaystyle S(t)=e^{tA_1}S_0+\int_0^t e^{(t-s)A_1}\Big(\Lambda-\mu_1S(s)-\dfrac{\alpha S(s)I(s)}{S(s)+I(s)}\Big)ds+W_S(t),\\
\displaystyle I(t)=e^{tA_2}I_0+\int_0^t e^{(t-s)A_2}\Big(-\mu_2 I(s)+\dfrac{\alpha S(s)I(s)}{S(s)+I(s)}\Big)ds+W_I(t),
\end{cases}
\end{equation}
where
$$W_S(t)=\int_0^t e^{(t-s)A_1}S(s)dW_1(s)\;\text{and}\;W_I(t)=\int_0^t e^{(t-s)A_2}I(s)dW_2(s),$$
or in the vector form
\begin{equation}\label{so-vec}
\displaystyle Z(t)=e^{tA}Z_0+\int_0^t e^{(t-s)A}F\big(Z(s)\big)ds+\int_0^t e^{(t-s)A}Z(s)dW(s),
\end{equation}
where $ Z=(S,I)$, $F(Z)=\big(F_1(Z),F_2(Z)\big):=\Big(\Lambda-\mu_1S-\dfrac{\alpha SI}{S+I},-\mu_2I+\dfrac{\alpha SI}{S+I}\Big)$, and
$$e^{(t-s)A}Z(s)dW(s):=\big(e^{(t-s)A_1}S(s)dW_1(s)\;,\; e^{(t-s)A_2}I(s)dW_2(s)\big).$$

 Because we are modeling the SIR epidemic systems, we are only interested in the positive ($\geq 0$) solutions. Therefore, we define a ``{\it positive mild solution}" of \eqref{eq1} as a mild solution $S(t,x),I(t,x)$ such that $S(t,x),I(t,x)\geq 0$, almost everywhere $x\in\0$, for all $t\geq 0$. Moreover, to have the term $\frac{si}{s+i}$ well defined, we assume that it is equal 0 whenever either $s=0$ or $i=0$.

\begin{rem}
The  integrals on the right-hand side of \eqref{so} are understood as  Bochner integrals (in the Banach space $H$) while
$W_S(t)$ and $W_I(t)$
are the stochastic integrals (stochastic convolutions).
The $S(s)$ (resp. $I(s)$) in the stochastic integrals is understood as multiplication operator, i.e.,
$$S(s)(u)=S(s)u,\quad \forall u\in H.$$
The stochastic integral $\int_0^t e^{(t-s)A_i}U(s)dW_i(s)$ (see \cite[Chapter 4]{Prato} for more details on  stochastic integrals) is well-define if the process $U(s)$ satisfies that
$$\int_0^t \sum_{k=1}^{\infty}a_{k,i}\abs{e^{(t-s)A_i} U(s) e_k}^2_Hds<\infty.$$
Finally, in the vector form, to simplify notation, we do not write the vectors in the column form. However, the calculations involving vectors are understood as in the usual sense.
\end{rem}

To investigate the epidemic models, an important question is  whether the infected individual will die out in the long time. That is, the consideration of extinction or permanence. Since the mild solution is used, let us introduce the definitions in the weak sense as follows.

\begin{defn}{\rm
A population
with density
$u(t,x)$ is said to be extinct in the mean
if
$$\limsup_{t\to\infty}\dfrac 1t\int_0^t\E\int_\0 u(s,x)dxds=0,$$
and that is said to be permanent in the mean if there exists a positive number $R_I$, is independent of initial conditions of population, such that
$$\liminf_{t\to\infty}\dfrac 1t\int_0^t\Big(\E\int_\0 \big(u^2(s,x)\wedge 1\big)dx\Big)^{\frac 12}ds\geq R_I.$$
}\end{defn}

\begin{rem}
It is well known that it is fairly difficult to confirm the existence of
 strong solutions for stochastic partial differential equations (even weak solution); see
 \cite[Section 6.1]{Prato}.
As an alternative, we shall use the notion of mild solutions. Hence, the convergence in our situation is in the weak sense. Note however, in the deterministic case, in \cite{Allen,Ducrot,Peng2009,Peng,Zhang}, the authors obtained strong solutions of the deterministic reaction-diffusion epidemic models and the convergence is taken in a
  space such as $L^{\infty}$, $E$, or a Sobolev space.
  In what follows, for convenience, we often suppress the phrase ``in the mean'' when we refer to extinction and permanence, because we are mainly working with mild solutions.
\end{rem}

\section{Existence and Uniqueness of the Positive Mild Solution}\label{sec:exist}
In this section, we shall prove the existence and uniqueness of the positive mild solution of the system as well as its continuous dependence on initial conditions. In what follows, without loss of the generality we can assume $\abs{\0}=1$, where $\abs{\0}$ is the volume of bounded domain $\0$ in $\R^l$ and the initial values are non-random for  simplicity.

\begin{thm}\label{existence}
For any initial data $0\leq S_0,I_0 \in E$, there exists a unique positive mild solution $\big(S(t),I(t)\big)$ of \eqref{eq1} belongs to
$L^p\big(\Omega; C([0,T],C(\bar\0,\R^2))\big)$
for any $T>0, p \geq 1.$
Moreover, this solution depends continuously on the initial data.
\end{thm}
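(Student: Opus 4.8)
The plan is to recast the mild formulation \eqref{so-vec} as a fixed-point problem on the Banach space $L^p\big(\Omega;C([0,T],C(\bar\0,\R^2))\big)$ and solve it by a contraction argument, after first replacing the singular coupling term by a globally Lipschitz surrogate. The only obstruction to global Lipschitz continuity of $F$ is the quotient $\tfrac{\alpha SI}{S+I}$, which is undefined at the origin of the $(S,I)$-plane. Writing $s^+=\max\{s,0\}$, I would check that $\tilde h(s,i):=\tfrac{s^+i^+}{s^++i^+}$ (with value $0$ at the origin) has first-order difference quotients bounded on all of $\R^2$, hence is globally Lipschitz, and that the induced Nemytskii operator $\tilde F$ acts Lipschitz-continuously on $C(\bar\0,\R^2)$ because it is evaluated pointwise. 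Since $\mu_i,\alpha\in C^2(\bar\0)$ are bounded multipliers and $\Lambda$ is a fixed source, $\tilde F$ is then globally Lipschitz with at most linear growth.

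Next I would study the map
$$
(\mathcal T Z)(t)=e^{tA}Z_0+\int_0^t e^{(t-s)A}\tilde F(Z(s))\,ds+\int_0^t e^{(t-s)A}Z(s)\,dW(s),
$$
showing it is a contraction on $L^p\big(\Omega;C([0,T],C(\bar\0,\R^2))\big)$ for $T$ small. The first two terms are routine: $e^{tA}Z_0$ is continuous and controlled by $\abs{Z_0}_{C(\bar\0,\R^2)}$ through \eqref{propertyA2}, while the Bochner term is handled by the same $L^\infty$-contractivity of the semigroup together with the linear-growth and Lipschitz bounds on $\tilde F$, producing a contraction factor of order $T$. The crux is the stochastic convolution $W_Z(t)=\int_0^t e^{(t-s)A}Z(s)\,dW(s)$, which must have $C(\bar\0,\R^2)$-valued, time-continuous paths of finite $L_{t,p}$-norm and depend Lipschitz-continuously on $Z$. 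Here I would use the factorization method: for a suitable $\theta\in(0,1)$ write $W_Z$ as the deterministic convolution $\int_0^t(t-s)^{\theta-1}e^{(t-s)A}Y_\theta(s)\,ds$ of the auxiliary process $Y_\theta(s)=\int_0^s(s-r)^{-\theta}e^{(s-r)A}Z(r)\,dW(r)$. The Burkholder--Davis--Gundy inequality bounds $\E\,\abs{Y_\theta(s)}_{L^q(\0,\R^2)}^p$, the multiplicative structure being tamed by the standing hypotheses $\sup_k\abs{e_k}_{L^\infty(\0,\R)}=C_0<\infty$ and $\sum_k a_{k,i}<\infty$, so that the relevant Hilbert--Schmidt norms are controlled by a constant multiple of $a_i\,\abs{Z(r)}^2_{C(\bar\0,\R^2)}$. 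The smoothing estimate \eqref{propertyA} together with the Sobolev--Slobodeckij embedding $W^{\eps,q}(\0,\R^2)\hookrightarrow C(\bar\0,\R^2)$, valid once $\eps q>l$, then upgrades $Y_\theta$ to continuous paths and yields the sup-in-time bound, provided $\theta\in(\eps/2,1/2)$ so that $(t-s)^{\theta-1}(t-s)^{-\eps/2}$ stays integrable; taking $p,q$ large makes all constraints compatible. Applying the same computation to $Z_1-Z_2$ gives a contraction factor of order $T^{\beta}$, $\beta>0$.

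Contraction then yields a unique mild solution of the truncated equation on a small interval; since $\tilde F$ has linear growth the a priori bounds preclude blow-up, and concatenation extends the solution to every $[0,T]$, while subtracting two mild formulations and running the same estimates through Gronwall's inequality gives continuous dependence on $Z_0$. It remains to remove the truncation by establishing positivity, i.e. $S(t,\cdot),I(t,\cdot)\ge0$ a.e. Conceptually the positive cone is invariant because the Neumann heat semigroups $e^{tA_i}$ are positivity preserving, the noise coefficient ($S$, resp. $I$) vanishes on $\{S=0\}$ (resp. $\{I=0\}$) and is thus tangent to the cone, and the drift points inward there ($\Lambda\ge0$ when $S=0$, and the coupling is $\ge0$ when $I=0$). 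To make this rigorous for a mild — rather than strong — solution I would approximate $A_i$ by its Yosida regularization, for which the equation becomes a genuine $H$-valued SDE amenable to It\^o's formula applied to a smoothed square-of-the-negative-part functional; the cone conditions force that functional to vanish, and passing to the limit transfers positivity to the mild solution. Once $S,I\ge0$, the surrogate $\tilde F$ agrees with $F$ along the solution, so it solves \eqref{eq1}.

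I expect the stochastic-convolution regularity — continuity in the spatial variable together with the $L_{t,p}$ bound under the multiplicative noise — to be the main obstacle, with the rigorous justification of positivity for the mild solution a close second.
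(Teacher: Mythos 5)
Your proposal is correct and follows the paper's own proof almost step for step: truncate the singular quotient to a globally Lipschitz Nemytskii operator (the paper's $f^*(x,s,i)=f(x,s\vee 0,i\vee 0)$ is exactly your $\tilde h$ construction), run a contraction on $L^p\big(\Omega;C([0,T_0],C(\bar\0,\R^2))\big)$ for large $p$ with the stochastic convolution handled by factorization, Burkholder, the smoothing estimate \eqref{propertyA}, and the embedding $W^{\eps,p}(\0,\R^2)\hookrightarrow C(\bar\0,\R^2)$ for $\eps p>l$ (using $\sup_k\abs{e_k}_{L^\infty}<\infty$ and $\sum_k a_{k,i}<\infty$), iterate on intervals of fixed length, prove positivity by regularizing to strong solutions and applying It\^o to a smoothed negative-part functional, then conclude the truncated solution solves \eqref{eq1} and get continuous dependence via Gronwall. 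The one place you genuinely diverge is the positivity mechanism, and it is worth noting why the paper's version is slightly more streamlined. First, the paper truncates the \emph{diffusion} coefficient as well, writing $Z^*\vee 0$ in \eqref{Z_n} and then smoothing the positive part by $\eps\Phi(\eps^{-1}\cdot)$ in \eqref{4.2}; since $g'(\xi)\Phi(\xi)=g''(\xi)\Phi(\xi)=0$ for all $\xi$, the It\^o correction term vanishes identically. With your untruncated linear noise coefficient $S$, the correction $\tfrac12\sum_k a_{k,1}\int_\0 g''(S)S^2e_k^2\,dx$ is nonnegative and does \emph{not} vanish on $\{S<0\}$, so ``the cone conditions force the functional to vanish'' is not immediate; you would need an extra closing step, e.g.\ the elementary bound $g''(\xi)\xi^2\le C\,g(\xi)$ for $\xi\le 0$ (which holds for the paper's $g$) followed by Gronwall from $\int_\0 g(S_0)\,dx=0$. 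Second, the paper does not replace $A_i$ by its Yosida approximation; it keeps $A_i$ and applies the resolvent $R_i(\lambda_i)$ to the \emph{coefficients} (following \cite[Proposition 1.3.6]{Kailiu}), so the regularized strong solution lies in $D(A_i)$ and the literal Green's identity $\int_\0 g'(u)\Delta u\,dx=-\int_\0 g''(u)\abs{\nabla u}^2dx\le 0$ is available. Your Yosida operator $A_{i,\lambda}$ is bounded but nonlocal, so this pointwise identity is lost; the fix is the standard sub-Markovian/Jensen argument showing $\langle A_{i,\lambda}u,g'(u)\rangle_H\le 0$ for convex $g$ (using that $\lambda R_i(\lambda,A_i)$ is a positivity-preserving, mass-conserving kernel for the Neumann Laplacian). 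Both repairs are routine, so your outline is sound; they are simply ingredients your sketch asserts rather than supplies, whereas the paper's coefficient-level truncation makes them unnecessary.
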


\begin{proof}
In this proof, the letter $c$  denotes a positive constant whose value may change in
different occurrences. We will write the dependence of constant on parameters explicitly if it is essential.
First, we rewrite the coefficients by defining $f$ and $f^*$
as follows:
$$f(x,s,i)=\Big(\Lambda(x)-\mu_1(x)s-\dfrac{\alpha(x) si}{s+i},-\mu_2(x)i+\dfrac{\alpha(x) si}{s+i}\Big),\;x\in\0,\;(s,i)\in \R^2,$$
and
$$f^*(x,s,i)=f\big(x,s\vee 0,i\vee 0\big).$$
Writing $z=(s,i)$, by noting that as our assumption, the term $\dfrac {si}{s+i}$ will be equal to $0$ whenever either $s=0$ or $i=0$, it is easy to see that $f^*(x,\cdot,\cdot):\R^2 \mapsto
\R^2$ is Lipschitz continuous, uniformly in $x\in\0$ so that the composition operator $F^*(z)$ associated with $f^*$, i.e.,
$$F^*(z)(x)=\big(F^*_1(z)(x),F_{2}^*(z)(x)\big):=f^*\big(x,z(x)\big),\quad x\in \0,$$
is Lipschitz continuous, both in $L^2(\0, \R^2)$ and $C(\bar \0,\R^2)$.
Now, we consider the following problem
\begin{equation}\label{Z_n}
dZ^*(t)=\big[AZ^*(t)+F^*\big(Z^*(t)\big)\big]dt+\big(Z^*(t)\vee 0\big)dW(t), \quad Z^*(0)=Z_0=(S_0,I_0),
\end{equation}
where $Z^*(t)=\big(S^*(t),I^*(t)\big)$ and $Z^*(t)\vee 0$ is defined by $$\big(Z^*(t)\vee 0\big)(x)=\big(S^*(t,x)\vee 0,I^*(t,x)\vee 0\big).$$
For any
$$u(t,x)=\big(u_1(t,x),u_2(t,x)\big)\in L^p\big(\Omega; C([0,T],C(\bar\0,\R^2))\big),$$
 consider the mapping
$$\gamma(u)(t):=e^{tA}Z_0+\int_0^t e^{(t-s)A}F^*\big(u(s)\big)ds+\varphi(u)(t),$$
where
\begin{align*}
\varphi(u)(t)&:=\int_0^t e^{(t-s)A}\big(u(s)\vee 0\big)dW(s)
\\&:=\Big(\int_0^t e^{(t-s)A_1}\big(u_1(s)\vee 0\big)dW_1(s),\int_0^t e^{(t-s)A_2}\big(u_2(s)\vee 0\big)dW_2(s)\Big).
\end{align*}
 We will prove that $\gamma$ is a contraction mapping in $L^p\big(\Omega; C([0,T_0],C(\bar\0,\R^2))\big),$ for some $T_0>0,$ and any $p\geq p_0$ for some $p_0$.

\begin{lem}\label{varphi}
There exists $p_0$ such that for any $p\geq p_0$, the mapping $\varphi$
$$\text{maps  }L^p\big(\Omega; C([0,t],C(\bar\0,\R^2))\big)\text{ into itself},$$
and for any $u=(u_1,u_2),v=(v_1,v_2)\in L^p\big(\Omega; C([0,t],C(\bar\0,\R^2))\big)$
\begin{equation}\label{estimatelambda}
\abs{\varphi (u)-\varphi (v)}_{L_{t,p}}\leq c_p(t)\abs{u-v}_{L_{t,p}},
\end{equation}
where $c_p(t)$ is some constant satisfying $c_p(t)\downarrow 0$ as $t\downarrow 0.$
\end{lem}

\begin{proof}
Let $p_0$ be sufficiently large to satisfy that for any $p\geq p_0$, we can choose simultaneously $\beta,\eps>0$ such that
\begin{equation*}
\frac 1p<\beta<\frac 12\quad\text{and}\quad \frac lp<\eps<2\big(\beta-\frac 1p\big).
\end{equation*}
Now, for any fixed $p\geq p_0$, let $\beta,\eps$ be chosen as above.
By using a factorization argument (see e.g., \cite[Theorem 8.3]{Prato}), we have
\begin{equation*}
\varphi(u)(t)-\varphi(v)(t)=\dfrac{\sin \pi \beta}{\pi}\int_0^t (t-s)^{\beta-1}e^{(t-s)A}Y_\beta(u,v)(s)ds,
\end{equation*}
where
$$Y_\beta(u,v)(s)=\int_0^s (s-r)^{-\beta}e^{(s-r)A}\Big(u(r)\vee 0-v(r)\vee 0\Big)dW(r).$$
If
$$\int_0^t \abs{Y_\beta(u,v)(s)}_{L^p(\0,\R^2)}^pds<\infty,\hbox{ a.s.},$$
then it is easily seen from the properties \eqref{propertyA} of semi-group $e^{tA}$  and H\"older's inequality that
\begin{equation}\label{lambda}
\begin{aligned}
&\abs{\varphi(u)(t)-\varphi(v)(t)}_{\eps,p}\\
&\quad \leq c_\beta\int_0^t (t-s)^{\beta-1}\big((t-s)\wedge 1\big)^{-\eps/2}\abs{Y_\beta(u,v)}_{L^p(\0,\R^2)}ds
\\
&\quad \leq c_{\beta,p}(t)\Big(\int_0^t \big((t-s)\wedge 1\big)^{\frac p{p-1}(\beta-\eps/2-1)}ds\Big)^{\frac{p-1}p}\Big(\int_0^t \abs{Y_\beta(u,v)(s)}_{L^p(\0,\R^2)}^pds\Big)^{\frac 1p}\\
&\quad\leq c_{\beta,p}(t)\Big(\int_0^t \abs{Y_\beta(u,v)(s)}_{L^p(\0,\R^2)}^pds\Big)^{\frac 1p},\a.s ,
\end{aligned}
\end{equation}
where $c_{\beta,p}(t)$ is some positive constant, satisfies $c_{\beta,p}(t)\downarrow 0$ as $t\downarrow 0$. Rewriting $Y_\beta(u,v)(s)=\big(Y_{1\beta}(u,v)(s),Y_{2\beta}(u,v)(s)\big)$, where
$$Y_{i\beta}(u,v)(s):=\int_0^s (s-r)^{-\beta}e^{(s-r)A_i}\big(u_i(r)\vee 0-v_i(r)\vee 0\big)dW_i(r),\;i=1,2.$$
Therefore,
applying the Burkholder inequality, we obtain that for all $s\in [0,t]$, almost every $x\in\0$,
\begin{equation*}
\begin{aligned}
&\!\!\!\E \abs{Y_{i\beta}(u,v)(s,x)}^p\leq c_p\E\Big[\int_0^s(s-r)^{-2\beta}\sum_{k=1}^{\infty}a_{k,i}\abs{M_i(s,r,k,x)}^2dr\Big]^{\frac p2}.
\end{aligned}
\end{equation*}
where
$$M_i(s,r,k)=e^{(s-r)A_i}\big(u_i(r)\vee 0-v_i(r)\vee 0\big)e_k.$$
In above, we used the notations
$$
Y_{i\beta}(u,v)(s,x):=Y_{i\beta}(u,v)(s)(x),\quad M_i(s,r,k,x):=M_i(s,r,k)(x),\;i=1,2.
$$
As a consequence,
\begin{equation}\label{Ybeta}
\begin{aligned}
\E \int_0^t &\abs{Y_\beta(u,v)(s)}_{L^p(\0,\R^2)}^pds\\
&\leq c_p(t)\E\int_0^t\int_\0 \Big(\abs{Y_{1\beta}(u,v)(s,x)}^p+\abs{Y_{2\beta}(u,v)(s,x)}^p\Big)dxds
\\&\leq c_p(t) \int_0^t\E\Big(\int_0^s (s-r)^{-2\beta}(a_1+a_2)\sup_{k\in\N}\abs{M(s,r,k)}_{L^{\infty}(\0,\R^2)}^2dr\Big)^\frac p2ds,
\end{aligned}
\end{equation}
where $M(s,r,k):=\big(M_1(s,r,k),M_2(s,r,k)\big)$ and $a_1,a_2$ are defined in \eqref{condition}. Moreover, since the uniformly boundedness property of $\{e_k\}_{k=1}^\infty$ and \eqref{propertyA2}, we have
\begin{equation}\label{Mr}
\sup_{k\in\N}\abs{M(s,r,k)}_{L^\infty(\0,\R^2)}\leq c\abs{u(r)-v(r)}_{C(\bar\0,\R^2)},
\end{equation}
for some constant $c$ independent of $s,r,u,v$.
Combining \eqref{Ybeta} and \eqref{Mr} implies that
\begin{equation}\label{boundedYbeta}
\begin{aligned}
\E \int_0^t &\abs{Y_\beta(u,v)(s)}_{L^p(\0,\R^2)}^pds
\\&\leq c_p(t)\int_0^t\E\sup_{r\in [0,s]}\abs{u(r)-v(r)}_{C(\bar\0,\R^2)}^p\Big(\int_0^s (s-r)^{-2\beta}dr\Big)^\frac p2ds
\\&\leq c_{\beta,p}(t)\int_0^t\E\sup_{r\in [0,s]}\abs{u(r)-v(r)}_{C(\bar\0,\R^2)}^pds\leq c_{\beta,p}(t)\abs{u-v}^p_{L_{t,p}}<\infty,
\end{aligned}
\end{equation}
where $c_{\beta,p}(t)$ is some positive constant and satisfies $c_{\beta,p}(t)\downarrow 0$ as $t\downarrow 0$. Therefore, the inequality \eqref{lambda} holds and as a consequence, $\varphi(u)(t)-\varphi(v)(t)\in W^{\eps,p}(\0,\R^2)$. Since $\eps >l/p$, the Sobolev embedding theorem implies that $\varphi(u)(t)-\varphi(v)(t)\in C(\bar\0,\R^2).$
Finally, \eqref{lambda} and \eqref{boundedYbeta} imply that
\begin{equation*}
\abs{\varphi(u)-\varphi(v)}_{L_{t,p}}\leq c_p(t)\abs{u-v}_{L_{t,p}}
\end{equation*}
for some constant $c_p(t)$, satisfying $c_p(t)\downarrow 0$ as $t\downarrow 0.$ The Lemma is proved.
\end{proof}

Therefore, for $p\geq p_0$, with sufficiently large $p_0$, $\gamma$ maps $L^p\big(\Omega; C([0,t],C(\bar\0,\R^2))\big)$ into itself. Moreover, by using \eqref{propertyA2}
 and Lipschitz continuity of $F^*$,
 we have
\begin{equation}\label{estimateF}
\begin{aligned}
\int_0^t&\abs{e^{(t-s)A}\big[F^*\big(u(s))-F^*(v(s)\big)\big]}_{C(\bar\0,\R^2)}^pds\leq c\int_0^t \abs{\big(u(s)-v(s)\big)}^p_{C(\bar\0,\R^2)}ds
\\&\leq c\int_0^t \sup_{r\in[0,s]}\abs{\big(u(r)-v(r)\big)}_{C(\bar\0,\R^2)}^pds\leq ct\sup_{s\in [0,t]}\abs{u(s)-v(s)}_{C(\bar\0,\R^2)}^p.
\end{aligned}
\end{equation}
Hence, \eqref{estimatelambda} and \eqref{estimateF} imply that
\begin{equation*}
\abs{\gamma (u)-\gamma (v)}_{L_{t,p}}\leq c_p(t)\abs{u-v}_{L_{t,p}},
\end{equation*}
where $c_p(t)$ is some constant depending on $p,t$ and satisfying $c_p(t)\downarrow 0$ as $t\downarrow 0$.
Therefore, for some $T_0$  sufficient small,
 $\gamma$ is a contraction mapping in $L^p\big(\Omega; C([0,T_0],C(\bar\0,\R^2))\big).$ By a fixed point argument we can conclude that equation \eqref{Z_n} admits a unique mild solution in $L^p\big(\Omega; C([0,T_0],C(\bar\0,\R^2))\big).$ Thus, by repeating the above argument in each finite time interval $[kT_0,(k+1)T_0]$, for any $T>0,p\geq p_0$ the equation \eqref{Z_n} admits a unique mild solution $Z^*(t)=\big(S^*(t),I^*(t)\big)$ in $L^p\big(\Omega; C([0,T],C(\bar\0,\R^2))\big).$
We proceed to prove the positivity of $S^*(t),I^*(t)$.

\begin{lem}\label{pos}
Let $\big(S^*(t),I^*(t)\big)$ be the unique mild solution of \eqref{Z_n}. Then $\forall t\in [0,T]$, $S^*(t),I^*(t)\geq 0$ a.s.
\end{lem}

\begin{proof}
Equivalently, $\big(S^*(t),I^*(t)\big)$ is the mild solution of the equation
\begin{equation}\label{S^*,I^*}
\begin{cases}
dS^*(t)=\big[A_1 S^*(t) + F_{1}\big(S^*(t)\vee 0,I^*(t)\vee 0\big)\big]dt + \big(S^*(t)\vee 0\big)dW_1(t),\\
dI^*(t)=\big[A_2 I^*(t) + F_{2}\big(S^*(t)\vee 0,I^*(t)\vee 0\big)\big]dt+\big(I^*(t)\vee 0\big)dW_2(t),\\
S^*(0)=S_0,I^*(0)=I_0.
\end{cases}
\end{equation}
For $i=1,2$, let $\lambda_i\in \rho(A_i)$ be the resolvent set of $A_i$ and $R_i(\lambda_i):=\lambda_i R_i(\lambda_i,A_i)$, with $R_i(\lambda_i,A_i)$ being the resolvent of $A_i$. For each small $\eps>0$, $\lambda=(\lambda_1,\lambda_2)\in \rho(A_1)\times\rho(A_2)$, by \cite[Proposition 1.3.6]{Kailiu}, there exists a unique strong solution $S_{\lambda,\eps}(t,x),I_{\lambda,\eps}(t,x)$ of the equation
\begin{equation}\label{4.2}
\begin{cases}
\displaystyle
dS_{\lambda,\eps}(t)=\Big[A_1S_{\lambda,\eps}(t)+R_1(\lambda_1)F_{1}\big(\eps\Phi(\eps^{-1} S_{\lambda,\eps}(t)),\eps\Phi(\eps^{-1} I_{\lambda,\eps}(t))\big)\Big]dt
\\\quad\quad\quad\quad\quad\quad+R_1(\lambda_1)\eps\Phi\big(\eps^{-1} S_{\lambda,\eps}(t)\big)dW_1(t),\\
\displaystyle
dI_{\lambda,\eps}(t)=\Big[A_2 I_{\lambda,\eps}(t)+ R_2(\lambda_2)F_{2}\big(\eps\Phi(\eps^{-1} S_{\lambda,\eps}(t)),\eps\Phi(\eps^{-1} I_{\lambda,\eps}(t))\big)\Big]dt
\\ \quad\quad\quad\quad\quad\quad+R_2(\lambda_2)\eps\Phi\big(\eps^{-1} I_{\lambda,\eps}(t)\big)dW_2(t),\\
S_{\lambda,\eps}(0)=R_1(\lambda_1)S_0,\quad I_{\lambda,\eps}(0)=R_2(\lambda_2)I_0,
\end{cases}
\end{equation}
where
\[
\Phi(\xi)=\begin{cases}0 \quad\text{if}\;\xi\leq 0,\\
3\xi^5-8\xi^4+6\xi^3 \quad\text{if}\;0<\xi< 1,\\
\xi \quad\text{if}\;\xi\geq1,
\end{cases}
\]
satisfying
\begin{equation*}
\begin{cases}
\Phi\in C^2(\R),\\
\eps\Phi(\eps^{-1}\xi)\to \xi\vee 0\quad \text{as}\;\eps\to 0.
\end{cases}
\end{equation*}
Combined with the convergence property in \cite[Proposition 1.3.6]{Kailiu}, we obtain that
$\big(S_{\lambda(k),\eps}(t)$, $I_{\lambda(k),\eps}(t)\big) \to \big(S^{*}(t),I^{*}(t)\big)$ in $L^p\big(\Omega; C([0,T],L^2(\0,\R^2))\big)$ for some sequence $\{\lambda(k)\}_{k=1}^{\infty}\subset \rho(A_1)\times\rho(A_2)$ and $\eps\to 0$.

Now, let
\[
g(\xi)=\begin{cases}\xi^2-\dfrac 16\quad\quad\;\;\;\text{if}\;\xi\leq -1,\\
-\dfrac {\xi^4}2-\dfrac{4\xi^3}{3}\quad\text{if}\;-1<\xi< 0,\\
0 \quad\quad\quad\quad\quad\;\text{if}\;\xi\geq 0.
\end{cases}
\]
Then $g'(\xi)\leq 0\;\forall \xi$ and $g''(\xi)\geq 0\;\forall \xi$. Hence,  we are to compute $d_t\big(\int_\0 g(I_{\lambda,\eps}(t,x)dx)\big)$. Since the fact $g'(\xi)\Phi(\xi)=g''(\xi)\Phi(\xi)=0\;\forall \xi$, by It\^o's Lemma \cite[Theorem 3.8]{Curtain}, we get
\begin{equation*}
\begin{aligned}
\int_\0 g\big(I_{\lambda,\eps}(t,x)\big)dx&=k_2\int_0^t\int_\0 g'\big(I_{\lambda,\eps}(s,x)\big)\Delta I_{\lambda,\eps}(s,x)dxds
\\&=-k_2\int_0^t \int_\0 g''\big(I_{\lambda,\eps}(s,x)\big)\abs{\nabla I_{\lambda,\eps}(s,x)}^2dxds
\\&\leq 0.
\end{aligned}
\end{equation*}
Since $g(\xi)>0$ for all $\xi<0$, we conclude that $\forall \lambda \in \rho(A_1)\times \rho(A_2), \eps> 0$ then $ I_{\lambda,\eps}(t,x)\geq 0$ for all $t\in [0,T]$, almost everywhere in $\0$. Similarly, we have
\begin{equation*}
\begin{aligned}
\int_\0 g\big(S_{\lambda,\eps}(t,x)\big)dx&=\int_0^t\int_\0 g'\big(S_{\lambda,\eps}(s,x)\big)\Big(k_1\Delta S_{\lambda,\eps}(s,x)+\big(R_1(\lambda_1)\Lambda\big)(x)\Big)dxds
\\&=-k_1\int_0^t \int_\0 g''\big(S_{\lambda,\eps}(s,x)\big)\abs{\nabla S_{\lambda,\eps}(s,x)}^2dxds
\\&\quad+\int_0^t\int_\0 g'\big(S_{\lambda,\eps}(s,x)\big)\big(R_1(\lambda_1)\Lambda\big)(x)dxds
\\&\leq 0,
\end{aligned}
\end{equation*}
where the last inequality above follows from the fact $$R_1(\lambda_1,A_1)=\int_0^\infty
e^{-\lambda_1 t}e^{tA_1}dt$$ preserves positivity. Again, since $g(\xi)>0$ for all $\xi<0$, we obtain the positivity of $S_{\lambda,\eps}(t,x)$. Hence, $S^{*}(t,x),I^{*}(t,x)\geq 0$ almost everywhere in $\0$ for all $t\in [0,T]$, a.s.
\end{proof}

\para{Completion of the Proof of the Theorem.}
Since $\big(S^*(t),I^*(t)\big)$ is a unique mild solution of \eqref{S^*,I^*} and is positive,
it is a mild solution of \eqref{eq1}. Therefore, the equation \eqref{eq1} admits a unique positive mild solution $\big(S(t),I(t)\big)$.

Now, we prove the second part. For  convenience, we use subscripts to indicate the dependence of the solution on initial value. Let $Z_{z_0}(t),Z_{z_0'}(t)$ be the positive mild solutions of \eqref{so-vec} with the initial condition $Z(0)=z_0$ and $Z(0)=z_0'$, respectively. That means,
$$Z_{z_0}(t)=e^{tA}z_0+\int_0^t e^{(t-s)A}F^*\big(Z_{z_0}(s)\big)ds+\int_0^t e^{(t-s)A} Z_{z_0}(s)dW(s),$$
and
$$Z_{z_0'}(t)=e^{tA}z_0'+\int_0^t e^{(t-s)A}F^*\big(Z_{z_0'}(s)\big)ds+\int_0^t e^{(t-s)A}Z_{z_0'}(s)dW(s).$$
It implies that
\begin{equation*}
\begin{aligned}
Z_{z_0}(t)-Z_{z_0'}(t)&=e^{tA}z_0-e^{tA}z_0'+\int_0^t e^{(t-s)A}\Big(F^*\big(Z_{z_0}(s)\big)-F^*\big(Z_{z_0'}(s)\big)\Big)ds
\\&\quad+\int_0^t e^{(t-s)A}\Big(Z_{z_0}(s)-Z_{z_0'}(s)\Big)dW(s).
\end{aligned}
\end{equation*}
Since \eqref{lambda} and \eqref{boundedYbeta}, we can obtain that
\begin{equation}\label{3.10}
\begin{aligned}
\E \sup_{s\in[0,t]}&\abs{\int_0^s e^{(s-r)A}\Big(Z_{z_0}(r)-Z_{z_0'}(r)\Big)dW(r)}^p_{C(\bar\0,\R^2)}
\\&\leq c_p(t)\int_0^t \E\sup_{r\in [0,s]}\abs{Z_{z_0}(r)-Z_{z_0'}(r)}_{C(\bar\0,\R^2)}^pds
\\&\leq c_p(t)\int_0^t \abs{Z_{z_0}-Z_{z_0'}}_{L_{s,p}}^pds
\end{aligned}
\end{equation}
Therefore, by virtue of \eqref{estimateF} and \eqref{3.10}, it is possible to get
\begin{equation*}
\abs{Z_{z_0}-Z_{z_0'}}_{L_{t,p}}^p\leq c_p\abs{z_0-z_0'}_{C(\bar\0,\R^2)}^p+c_p(t)\int_0^t \abs{Z_{z_0}-Z_{z_0'}}_{L_{s,p}}^pds.
\end{equation*}
Hence, it is easy to obtain from Gronwall's inequality that
$$\abs{Z_{z_0}-Z_{z_0'}}_{L_{T,p}}^p\leq c_p(T)\abs{z_0-z_0'}_{C(\bar\0,\R^2)}^p.$$
Therefore, the continuous dependence of the solution on initial values is proved.
\end{proof}

\section{Longtime Behavior}\label{sec:longtime}
This section investigates the properties of the positive mild solution $\big(S(t),I(t)\big)$ of system \eqref{eq1} when $t\to\infty$. In particular, we provide the sufficient conditions for the extinction and permanence.
For each function $u\in E$,  denote $$u_*=\inf_{x\in\bar\0}u(x).$$
Define the number
$$\hat R=\int_\0\alpha(x)dx-\int_\0\mu_2(x)dx-\dfrac {a_2}2.$$

\begin{thm}\label{permanence}
If $\Lambda_*>0$ and $\hat R>0$, then the infected class is permanent in the sense that for any the initial values $0\leq S_0,I_0\in E$ satisfying
$$\int_\0 -\ln I_0(x)dx<\infty,$$
we have
$$
\liminf_{t\to\infty} \dfrac1t\int_0^t\left(\E\int_\0 \big(I^2(s,x)\wedge 1\big) dx\right)^{\frac12}ds\geq R_I,
$$
for some $R_I>0$ independent of initial values.
\end{thm}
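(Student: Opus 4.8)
The plan is to track the spatially integrated logarithm of the infected density, $L(t):=\int_\0\ln I(t,x)\,dx$, which is the natural functional for detecting persistence. Because $\ln$ is singular at $0$, I would not apply It\^o's formula to $L$ directly but to the regularized strong solutions $I_{\lambda,\eps}$ already constructed in Lemma \ref{pos}, using a smooth approximation of $\ln$, and then pass to the limit $\eps\to0$ along the sequence $\lambda(k)$. The key computation is that the Laplacian contributes the non-negative term $\int_\0 k_2|\nabla I|^2/I^2\,dx\ge0$ (integrate by parts using the Neumann condition), the noise produces the It\^o correction $-\tfrac12\sum_k a_{k,2}\int_\0 e_k^2\,dx=-a_2/2$ (orthonormality of $\{e_k\}$ in $H$), and the reaction term yields $\int_\0\big(-\mu_2+\tfrac{\alpha S}{S+I}\big)\,dx$. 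Writing $\tfrac{\alpha S}{S+I}=\alpha-\tfrac{\alpha I}{S+I}$ and discarding the non-negative gradient term, I obtain after taking expectations the fundamental inequality
\begin{equation*}
\E L(t)-L(0)\ge\int_0^t\Big(\hat R-\E\int_\0\frac{\alpha I}{S+I}\,dx\Big)\,ds .
\end{equation*}

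Second, I would establish an a priori upper bound on $\E L(t)$ that is uniform in time. A total-mass estimate for $N=S+I$—integrate the two equations so that the Laplacians drop by the Neumann condition, take expectations, and apply Gronwall using the mortality terms—gives $\sup_t\E\int_\0 I\,dx<\infty$; together with $\ln x\le x$ this yields $\E L(t)\le C$, whence $\limsup_t (\E L(t)-L(0))/t\le0$. Here the hypothesis $\int_\0-\ln I_0\,dx<\infty$ is precisely what guarantees $L(0)$ is finite, so that $L(0)/t\to0$. Feeding this into the fundamental inequality produces
\begin{equation*}
\liminf_{t\to\infty}\frac1t\int_0^t\E\int_\0\frac{\alpha I}{S+I}\,dx\,ds\ge\hat R>0,
\end{equation*}
so on a long-time average the transmission term stays bounded below.

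The crux is to convert this into a lower bound on $\Theta(s):=\big(\E\int_\0(I^2\wedge1)\,dx\big)^{1/2}$. For $\eps\in(0,1)$ I would split according to whether $S\ge\eps$ or $S<\eps$: on $\{S\ge\eps\}$ one has $\tfrac{I}{S+I}\le\tfrac1\eps(I\wedge1)$, while on $\{S<\eps\}$ one has $\tfrac{I}{S+I}\le1\le2\eps/S$, so that $\tfrac{\alpha I}{S+I}\le\bar\alpha\big(\tfrac1\eps(I\wedge1)+2\eps/S\big)$ with $\bar\alpha:=\sup_{\bar\0}\alpha$. Since $\abs{\0}=1$, Cauchy--Schwarz gives $\E\int_\0(I\wedge1)\,dx\le\Theta$. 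The remaining piece forces the main obstacle: controlling the negative moment $\E\int_\0 S^{-1}\,dx$, i.e.\ showing that the susceptible density does not concentrate near $0$. This is exactly where $\Lambda_*>0$ enters. I would run the same logarithmic It\^o computation on $S$: the recruitment produces a drift $\int_\0\Lambda/S\,dx\ge\Lambda_*\int_\0 S^{-1}\,dx$ that dominates the bounded contributions $-\int_\0\mu_1-\int_\0\alpha-a_1/2$, so that using $\E\int_\0\ln S(t)\,dx\le\E\int_\0 S\,dx\le C$ one obtains the time-averaged bound $\limsup_t\tfrac1t\int_0^t\E\int_\0 S^{-1}\,dx\,ds\le C/\Lambda_*$; the initial term $-\int_\0\ln S_0$ is annihilated by the division by $t$ (and, if $S_0$ is not log-integrable, the estimate can be started at a positive time, since $\Lambda_*>0$ together with the heat semigroup regularizes $S$ instantly).

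Finally, time-averaging the splitting inequality and combining the three ingredients yields
\begin{equation*}
\hat R\le\frac{\bar\alpha}{\eps}\,\liminf_{t\to\infty}\frac1t\int_0^t\Theta(s)\,ds+\frac{2\bar\alpha C}{\Lambda_*}\,\eps .
\end{equation*}
Choosing $\eps$ small enough that the last term is at most $\hat R/2$ gives $\liminf_t\tfrac1t\int_0^t\Theta(s)\,ds\ge\eps\hat R/(2\bar\alpha)=:R_I>0$. Since all constants depend only on $\hat R,\Lambda_*,\alpha,\mu_i,a_i$ and the initial-data terms are killed by the factor $1/t$, the threshold $R_I$ is independent of the initial conditions, as claimed. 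I expect the genuinely delicate points to be the limit passage justifying It\^o's formula for the singular function $\ln$ (controlling the gradient term along the approximation $I_{\lambda,\eps}$) and the uniform-in-time negative-moment estimate for $S$, both of which must be carried out through the regularization of Lemma \ref{pos}.
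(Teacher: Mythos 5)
Your overall architecture (regularize, apply It\^o to a logarithmic functional of $I$, extract the average of the transmission term, convert to $\Theta$ via a splitting plus negative moments of $S$ driven by $\Lambda_*>0$) parallels the paper's proof, but there is one step that genuinely fails under the stated hypotheses: your claim that ``Gronwall using the mortality terms'' gives $\sup_t\E\int_\0 I\,dx<\infty$, hence $\limsup_t \E L(t)/t\le 0$. That mass bound requires $\mu_*=\inf_{\bar\0}\min\{\mu_1,\mu_2\}>0$, which the theorem does not assume: the hypotheses $\Lambda_*>0$ and $\hat R=\int_\0\alpha-\int_\0\mu_2-a_2/2>0$ are perfectly compatible with $\inf_{\bar\0}\mu_2=0$ (the paper's lemma with $\mu_*>0$ is stated conditionally and is not what its permanence argument uses). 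Without it, the drift of $\E\int_\0(S+I)\,dx$ is only bounded by $\abs{\Lambda}_E$, total mass can grow linearly, and the best generic bound is $\E L(t)\le\E\int_\0 I(t,x)\,dx\le\int_\0(S_0+I_0)\,dx+t\abs{\Lambda}_E$; your fundamental inequality then degrades to $\liminf_t\frac1t\int_0^t\E\int_\0\frac{\alpha I}{S+I}\,dx\,ds\ge\hat R-\abs{\Lambda}_E$, which can be vacuous. The paper's device that removes this obstruction is to apply It\^o not to $\int_\0\ln I\,dx$ but to the \emph{bounded} functional $\int_\0\ln\frac{I+\eps}{1+I}\,dx\le 0$ (for $0<\eps<1$): the upper bound by zero is free of any mass estimate, and the two gradient terms combine with the favorable sign $\frac1{(I+\eps)^2}-\frac1{(1+I)^2}\ge0$. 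Its drift produces $\frac{\alpha I}{S+I}+\frac{\alpha SI}{(S+I)(1+I)}$, which is then compared with $\big(\E\int_\0\frac{I^2}{(1+I)^2}dx\big)^{1/2}$ by Cauchy--Schwarz against the weight $\frac1S+1$ (using $\frac{1+I}{S+I}\le\frac1S+1$). Substituting this functional for your $L$ is the missing idea; the rest of your scheme then goes through.

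Two secondary points. First, your derivation of the time-averaged bound on $\E\int_\0 S^{-1}dx$ also quotes $\E\int_\0\ln S(t)\,dx\le C$ from the same unavailable mass bound; here, however, the linear-growth estimate $\E\int_\0 S(t,x)\,dx\le\int_\0 S_0\,dx+t\abs{\Lambda}_E$ suffices after division by $t$, so this piece is repairable without $\mu_*>0$. Second, your parenthetical that ``$\Lambda_*>0$ together with the heat semigroup regularizes $S$ instantly'' when $S_0$ is not log-integrable is precisely the hard step you cannot wave off: deterministic semigroup smoothing does not by itself control stochastic negative moments of $S$ from an initial datum that may vanish. The paper devotes Lemma \ref{lm4.2} to exactly this, bootstrapping on $[0,4]$ through fractional moments (first $(\bar S_n+\eps)^{1/2}$, then powers $-\frac12,-\frac32,-\frac52$) to obtain $\E\int_\0\bar S_n^{-2}(4,x)\,dx\le\ell_0$ uniformly in the approximation, after which the uniform-in-time Lemma \ref{lm4.1} propagates the bound for $t\ge4$; note also the paper needs the second negative moment $\E\int_\0 S^{-2}dx$ because its Cauchy--Schwarz pairing is against $\big(\frac1S+1\big)^2$, whereas your splitting would only need $\E\int_\0 S^{-1}dx$ --- a mild simplification, but one that only matters once the $\ln\frac{I+\eps}{1+I}$ fix above is in place.
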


\begin{proof}
To obtain the longtime properties of $\big(S(t),I(t)\big)$, one of tools we  use is It\^o's formula. Unfortunately, in general the It\^o's formula is not valid for the mild solutions. Hence, our idea is to approximate the solution by a sequence of strong solutions when the noise is finite dimensional. First, we assume that $S_0,I_0\in D(A_i^E)$, where $D(A_i^E)$ is the domain of $A_i^E$, the part of $A_i$ in $E$.
For each fixed $n\in \N$, let $\bar S_n(t,x),\bar I_{n}(t,x)$ be the strong solution (see
\cite{Prato}
for more details about strong solutions, weak solutions, and mild solutions) of the following equations
\begin{equation}\label{StrongS_n}
\begin{cases}
d\bar S_n(t,x)=\Big[A_1\bar S_n(t,x)+\Lambda(x)-\mu_1(x)\bar S_n(t,x)-\dfrac{\alpha(x) \bar S_n(t,x)\bar I_n(t,x)}{ \bar S_n(t,x)+\bar I_n(t,x)}\Big]dt \\
\hspace{7.5cm}+ \displaystyle \sum_{k=1}^n \sqrt{a_{k,1}}e_k(x)\bar S_n(t,x)dB_{k,1}(t),\\
d\bar I_n(t,x)=\Big[A_2 \bar I_n(t,x)-\mu_2(x) \bar I_n(t,x) + \dfrac{\alpha(x) \bar S_n(t,x)\bar I_n(t,x)}{ \bar S_n(t,x)+\bar I_n(t,x)}\Big]dt\\
\hspace{7.5cm}+\displaystyle \sum_{k=1}^n \sqrt{a_{k,2}}e_k(x)\bar I_n(t,x)dB_{k,2}(t),\\
\bar S_n(x,0)=S_0(x),\quad \bar I_n(x,0)=I_0(x).
\end{cases}
\end{equation}
The existence and uniqueness of the strong solution of \eqref{StrongS_n} follow the results in \cite{Prato85} or \cite[Section 7.4]{Prato}. To see that the conditions in these references are satisfied, we note that the semi-groups $e^{tA_1},e^{tA_2}$ (as well as their restrictions to $E$) are analytic (see \cite[Chapter 2]{Arendt}) and strongly continuous (see \cite[Appendix A.5.2]{Prato}). Moreover, since the characterizations of fractional power of elliptic operators in (\cite[Chapter 16]{Yagi} or \cite[Appendix A]{Prato}, it is easy to confirm that the coefficients in equation \eqref{StrongS_n} satisfies condition (e) in Hypothesis 2 in \cite{Prato85}.
Moreover, a detailed argument can be also found in \cite{NY19,NY20}.

In addition, since the continuous dependence on parameter $\xi$ of the fixed points of family of uniform contraction mappings $T(\xi)$, by a similar ``parameter-dependent contraction mapping" argument, it is easy to obtain that (see \cite{Prato} or \cite[Proposition 4.2]{NY20}) for any fixed $t$,
$$\lim_{n\to\infty}\E\abs{S(t)-\bar S_n(t)}_H^2\to 0,$$
and
\begin{equation*}
\lim_{n\to\infty}\E\abs{I(t)-\bar I_n(t)}_H^2\to 0.
\end{equation*}

To proceed, we state and prove following auxiliary Lemmas.

\begin{lem}
Let
$$\mu_*:=\inf_{x\in\bar\0}\min\{\mu_1(x),\mu_2(x)\}.$$
If $\mu_*>0$ then
$$
\E\int_\0\big(S(t,x)+ I(t,x)\big)dx\leq e^{-\mu_* t} \int_\0\big(S_0(x)+I_0(x)\big)dx+ \dfrac{ |\Lambda|_E}{\mu_*}.
$$
\end{lem}

\begin{proof}
In view of It\^o's formula (\cite[Theorem 3.8]{Curtain}), we can obtain
$$
\begin{aligned}
\E e^{\mu_*t}\int_\0\big(\bar S_n(t,x)+ \bar I_n(t,x)\big)dx
&\leq \int_\0\big(S_0(x)+I_0(x)\big)dx+\E\int_0^t e^{\mu_*s}\int_\0\Lambda(x)dx ds\\&
\leq \int_\0\big(S_0(x)+I_0(x)\big)dx+\dfrac{ |\Lambda|_E}{\mu_*}e^{\mu_* t}.
\end{aligned}
$$
Letting $n\to\infty$, we obtain the desired result.
\end{proof}

Now, we  are in a position to estimate $\E \int_\0 \frac 1{\bar S_n^p(t ,x)}dx$ by the following Lemma.

\begin{lem}\label{lm4.1}
For any $p>0$, if $\int_\0\frac 1{S_0^p(x)}dx<\infty$, there exists $\wdt K_p>0$, which is independent of $n$ and initial conditions such that
\begin{equation*}
\begin{aligned}
\E&\int_\0 \dfrac 1{\bar S_n^p(t ,x)}dx
\leq e^{-t}\int_\0\frac 1{S_0^p(x)}dx+\wdt K_p.
\end{aligned}
\end{equation*}
\end{lem}

\begin{proof}
For any $0<\eps<\frac{p\Lambda_*}{2}$, using It\^o's Lemma (\cite[Theorem 3.8]{Curtain}) and by direct calculations, we have
\begin{equation}\label{1/s^2}
\begin{aligned}
&\!\!\! e^{t }\int_\0 \dfrac 1{\big(\bar S_n(t ,x)+\eps\big)^p}dx \\ & \ =\int_\0\dfrac 1{\big(S_0(x)+\eps\big)^p}dx+\int_0^{t } e^s\int_\0 \dfrac 1{\big(\bar S_n(s,x)+\eps\big)^p}dxds +\int_0^{t } e^s\int_\0\dfrac {-p}{\big(\bar S_n(s,x)+\eps\big)^{p+1}}\\
\\&\qquad \times \Bigg(k_1\Delta \bar S_n(s,x)+\Lambda(x)-\mu_1(x)\bar S_n(s,x)-\dfrac{\alpha(x)\bar S_n(s,x)\bar I_n(s,x)}{\bar S_n(s,x)+\bar I_n(s,x)}\Bigg)dxds
\\&\qquad+\dfrac 12 \int_0^{t } e^s\sum_{k=1}^n\int_\0 \dfrac{p(p+1)a_{k,1}e^2_k(x)\bar S_n^2(s,x)}{\big(\bar S_n(s,x)+\eps\big)^{p+2}}dxds\\
&\qquad +\sum_{k=1}^n\int_0^{t } e^s\Big[\sqrt{a_{k,1}}\int_\0 \dfrac{-pe_k(x)\bar S_n(s,x)}{\big(\bar S_n(s,x)+\eps\big)^{p+1}}dx\Big]dB_{k,1}(s)
\\&\leq \int_\0\dfrac 1{\big(S_0(x)+\eps\big)^p}dx+\int_0^{t } e^s\int_\0\dfrac{-pk_1\Delta\bar S_n(s,x)}{\big(\bar S_n(s,x)+\eps\big)^{p+1}}dxds
\\&\qquad+\int_0^{t } e^s\int_\0 \dfrac{p}{\big(\bar S_n(s,x)+\eps\big)^{p+1}}\Big(-\Lambda (x)+\frac{\eps}p+\big(\abs{\mu_1}_E+\abs{\alpha}_E+\dfrac 1p+\dfrac{p+1}{2}a_1C_0^2\big)
\\&\qquad \times \bar S_n(s,x)\Big)dxds+\sum_{k=1}^n\int_0^{t } e^s\Big[\sqrt{a_{k,1}}\int_\0 \dfrac{-pe_k(x)\bar S_n(s,x)}{\big(\bar S_n(s,x)+\eps\big)^{p+1}}dx\Big]dB_{k,1}(s)
\\&\leq \int_\0\dfrac 1{\big(S_0(x)+\eps\big)^p}dx+\int_0^{t } \dfrac{pK_p^{p+1}2^p}{\Lambda_*^p}e^sds\\
&\qquad +\sum_{k=1}^n\int_0^{t } e^s\Big[\sqrt{a_{k,1}}\int_\0 \dfrac{-pe_k(x)\bar S_n(s,x)}{\big(\bar S_n(s,x)+\eps\big)^{p+1}}dx\Big]dB_{k,1}(s),
\end{aligned}
\end{equation}
where $K_p=\abs{\mu_1}_E+\abs{\alpha}_E+\frac 1p+\frac{p+1}{2}a_1C_0^2$.
In the above, we used the following facts
$$\int_\0\dfrac{-pk_1\Delta\bar S_n(s,x)}{\big(\bar S_n(s,x)+\eps\big)^{p+1}}dx=-p(p+1)k_1\int_\0\dfrac{\abs{\nabla \bar S_n(s,x)}^2}{\big(\bar S_n(s,x)+\eps\big)^{p+2}}dx\leq 0 \ \hbox{ a.s.,}$$
and
\begin{equation*}
\begin{aligned}
\int_\0 &\dfrac{p}{(\bar S_n(s,x)+\eps)^{p+1}}\Big(-\Lambda (x)+\dfrac{\eps}p+\big(\abs{\mu_1}_E+\abs{\alpha}_E+\dfrac 1p+\dfrac{p+1}{2}a_1C_0^2\big)\bar S_n(s,x)\Big)dx
\\&\leq\int_\0\dfrac{p}{\big(\bar S_n(s,x)+\eps\big)^{p+1}}\Big(-\dfrac{\Lambda_*}2+K_p\bar S_n(s,x)\Big)\1_{\{\bar S_n(s,x)\geq \frac{\Lambda_*}{2K_p}\}}dx
\\&\leq \dfrac{pK_p^{p+1}2^p}{\Lambda_*^p}\a.s
\end{aligned}
\end{equation*}
Hence, \eqref{1/s^2} implies that $\forall t\geq 0$, $\forall n\in \N$
\begin{equation}\label{1/s+eps}
\begin{aligned}
\E \int_\0 \dfrac 1{\big(\bar S_n(t ,x)+\eps\big)^p}dx\leq e^{-t}\int_\0\dfrac 1{\big(S_0(x)+\eps\big)^p}dx+ e^{-t }\int_0^{t }\dfrac{pK_p^{p+1}2^p}{\Lambda_*^p}e^sds.
\end{aligned}
\end{equation}
Letting $\eps\to 0$, we have from the monotone convergence theorem that
\begin{equation}\label{e1/s^2}
\begin{aligned}
\E \int_\0 \dfrac 1{\bar S_n^p(t ,x)}dx\leq e^{-t}\int_\0\frac 1{S_0^p(x)}dx+ e^{-t }\int_0^{t }\dfrac{pK_p^{p+1}2^p}{\Lambda_*^p}e^sds.
\end{aligned}
\end{equation}
The proof of the Lemma is completed.
\end{proof}

Noting that our initial condition are not assumed to satisfy $\int_\0 \frac 1{S^2_0(x)}dx<\infty$. However, we will prove that after some finite time, the solutions have the inverse functions that belong to $L^2(\0,\R)$ as the following Lemma.

\begin{lem}\label{lm4.2}
For any $n\in\N$
\begin{equation*}
\begin{aligned}
\E &\int_\0 \dfrac 1{\bar S_n^2(4 ,x)}dx
\leq \ell_0,
\end{aligned}
\end{equation*}
 where $\ell_0$ depends only initial condition (independent of $n$).
\end{lem}

\begin{proof}
By the following facts:
\begin{equation*}
\begin{aligned}
\E\int_\0 \bar S_n(t,x)dx&=\int_\0 S_0(x)dx +\int_0^t \E\int_\0 \Big(k_1\Delta \bar S_n(s,x)+\Lambda(x)
\\&\quad\quad\quad-\mu_1(x)\bar S_n(s,x)-\dfrac{\alpha(x)\bar S_n(s,x)\bar I_n(s,x)}{\bar S_n(s,x)\bar I_n(s,x)}\Big)dxds
\\&\leq \int_\0 S_0(x)dx +t\abs{\Lambda}_E,
\end{aligned}
\end{equation*}
and  $s^q\leq s+1,\;\forall s\in\R>0, q \in [0,1]$, it is easy to show that there exists $\ell_{1}>0$ such that
\begin{equation}\label{e1-lm4.2}
\E \int_\0 \bar S_n^q(t ,x)dx
\leq \ell_{1}, \text{ for any } t\in[0,1], q\in[0,1],
\end{equation}
where $\ell_1$ is independent of $n$. For any $\eps>0$, using It\^o's Lemma (\cite[Theorem 3.8]{Curtain}) again, we have
\begin{equation}\label{e2-lm4.2}
\begin{aligned}
&\!\!\!\E \int_\0 \big(\bar S_n(1 ,x)+\eps\big)^{\frac12}dx \\ & \ =\int_\0\big(S_0(x)+\eps\big)^{\frac12}dx+\int_0^{1}\E \int_\0\dfrac {1}{2\big(\bar S_n(s,x)+\eps\big)^{\frac12}}\Big(k_1\Delta \bar S_n(s,x)\\
\\&\qquad+\Lambda(x)-\mu_1(x)\bar S_n(s,x)-\dfrac{\alpha(x)\bar S_n(s,x)\bar I_n(s,x)}{\bar S_n(s,x)+\bar I_n(s,x)}\Big)dxds
\\&\qquad-\dfrac 18 \int_0^{1} \E\sum_{k=1}^n\int_\0 \dfrac{a_{k,1}e^2_k(x)\bar S_n^2(s,x)}{\big(\bar S_n(s,x)+\eps\big)^{\frac32}}dxds.\\
&\geq \frac12\int_0^1\E \int_\0 \dfrac{\Lambda(x)}{\big(\bar S_n(s,x)+\eps\big)^{\frac12}}dxds
-N_1  \int_0^1\left(\E\int_\0 \bar S_n^{\frac12}(s ,x)dx\right)ds,
\end{aligned}
\end{equation}
where $$N_1=\dfrac{\abs{\mu_1}_E+\abs{\alpha}_E+\frac {a_1C_0^2}4}{2}.$$
In view of \eqref{e1-lm4.2} and \eqref{e2-lm4.2}, we have
$$
\frac12\int_0^1\E \int_\0 \dfrac{\Lambda(x)}{\big(\bar S_n(s,x)+\eps\big)^{\frac12}}dxds\leq (1+N_1)\ell_{1}+\sqrt{\eps},\quad\forall \eps>0,$$
which implies that
$$
\int_0^1\E \int_\0 \dfrac{\Lambda(x)}{\big(\bar S_n(s ,x)\big)^{\frac12}}dxds\leq 2(1+N_1)\ell_{1}.$$
or there exists $t_1=t_1(n)\in[0,1]$
such that
$$
\E \int_\0 \dfrac{\Lambda(x)}{\big(\bar S_n(t_1,x)\big)^{\frac12}}dx\leq \dfrac{2(1+N_1)\ell_{1}}{\Lambda_*}.
$$
Applying Lemma \ref{lm4.1} and the Markov property of $(S_n, I_n)$, we have
$$
\E \int_\0 \dfrac{\Lambda(x)}{\big(\bar S_n(t ,x)+\eps\big)^{\frac12}}dx\leq\ell_{2}, \;\forall t\in[1,2],
$$
for some $\ell_{2}$ independent of $n$. We again have
\begin{equation}\label{e3-lm4.2}
\begin{aligned}
&\!\!\!\E \int_\0 \big(\bar S_n(2 ,x)+\eps\big)^{-\frac12}dx \\ & \ =\E\int_\0(\bar S_n(1,x)+\eps)^{-\frac12}dx -\int_1^{2}\E \int_\0\dfrac {1}{2\big(\bar S_n(s,x)+\eps\big)^{\frac32}}\Big(k_1\Delta \bar S_n(s,x)\\
\\&\qquad+\Lambda(x)-\mu_1(x)\bar S_n(s,x)-\dfrac{\alpha(x)\bar S_n(s,x)\bar I_n(s,x)}{\bar S_n(s,x)+\bar I_n(s,x)}\Big)dxds
\\&\qquad+\dfrac 38 \int_1^{2 } \E\sum_{k=1}^n\int_\0 \dfrac{a_{k,1}e^2_k(x)\bar S_n^2(s,x)}{\big(\bar S_n(s,x)+\eps\big)^{\frac32}}dxds\\
&\leq -\frac12\int_1^2\E \int_\0 \dfrac{\Lambda(x)}{\big(\bar S_n(s ,x)+\eps\big)^{\frac32}}dxds
+\E\int_\0\big(\bar S_n(1,x)+\eps\big)^{-\frac12}dx\\
&\qquad +N_2  \int_1^2\left(\E\int_\0 \bar S_n^{-\frac12}(s ,x)dx\right)ds,
\end{aligned}
\end{equation}
where $$N_2=\dfrac{\abs{\mu_1}_E+\abs{\alpha}_E+\frac {3a_1C_0^2}4}{2}.$$
 Thus,
$$\int_1^2\E \int_\0 \dfrac{\Lambda(x)}{\big(\bar S_n(s ,x)+\eps\big)^{\frac32}}dxds\leq \ell_{3},
$$
for some $\ell_{3}$ depending only on initial conditions. Letting $\eps\to0$, we can obtain that for some $t_2=t_2(n)\in[1,2]$
$$\E \int_\0 \dfrac{\Lambda(x)}{\bar S_n^{\frac32}(t_2,x)}dx\leq \ell_{3},$$
which together with Lemma \ref{lm4.1} implies that
$$\E \int_\0 \dfrac{\Lambda(x)}{\big(\bar S_n(t ,x)+\eps\big)^{\frac32}}dx\leq \ell_{4},\; \forall t\in[2,3],$$
where $\ell_4$ is some constant independent of $n$. Keeping this process we can obtain that there exists $t_3=t_3(n)\in [0,4]$, $\ell_5$ such that
$$\E \int_\0 \dfrac{\Lambda(x)}{(\bar S_n(t_3 ,x))^{\frac52}}dx\leq \ell_{5}.$$
Therefore, it is possible to obtain the existence of two constants $t_4=t_4(n)\in [0,4]$ and $\ell_6$ satisfying
$$\E \int_\0 \dfrac 1{\big(\bar S_n(t_4,x)\big)^2}dx<\ell_6.$$
The Lemma is proved by applying Lemma \ref{lm4.1}.
\end{proof}

In view of Lemma \ref{lm4.1} and Lemma \ref{lm4.2}, we have
\begin{equation}\label{1/sn^2}
\begin{aligned}
\E \int_\0 \dfrac 1{\bar S_n^2(t ,x)}dx\leq e^{-t}\ell_0+ \wdt K_2\;\forall n\in\N, t\geq 4.
\end{aligned}
\end{equation}
Noting that both $\ell_0$ and $\wdt K_2$ are independent of $n$; and $\ell_0$ may depend on initial point but $\wdt K_2$ is independent.
By It\^o's Lemma (\cite[Theorem 3.8]{Curtain}) again and similar calculations in the process of getting \eqref{1/s+eps} we have
\begin{equation*}
\begin{aligned}
\E \int_\0\bar I_n(t,x)dx\geq&
\E \int_\0\ln \big(\bar I_n(t,x)+\eps\big)dx\\
&=\int_\0\ln \big(I_0(x)+\eps\big)dx+\int_0^t \E\int_\0\dfrac 1{\bar I_n(s,x)+\eps}\Big(k_2\Delta \bar I_n(s,x)
\\&\quad\quad\quad-\mu_2(x)\bar I_n(s,x)+\dfrac{\alpha(x)\bar S_n(s,x)\bar I_n(s,x)}{\bar S_n(s,x)+\bar I_n(s,x)}dxds\Big)
\\&\quad\quad\quad-\dfrac 12\int_0^t\E\sum_{k=1}^n \int_\0\dfrac{a_{k,2}\bar I_n^2(s,x)e_k^2(x)}{\big(\bar I_n(s,x)+\eps\big)^2}dxds
\\&\geq \int_\0\ln \big(I_0(x)+\eps\big)dx-\big(\dfrac {a_2}2+\abs{\mu_2}_E\big)t,\;\forall n\in\N, \forall t>0, 0<\eps<1.
\end{aligned}
\end{equation*}
As a consequence
\begin{equation}\label{lb-lnI}
\E \int_\0\bar I_n(t,x)dx\geq\E\int_\0 \ln \bar I_n(t,x)dx\geq\int_\0\ln I_0(x)dx-\big(\dfrac{a_2}2+\abs{\mu_2}_E\big)t>-\infty,\;\forall t>0.
\end{equation}
That means
\begin{equation}\label{I>0a.e}
\PP\big\{\bar I_n(t,x)>0\;\text{almost everywhere in}\;\0\big\}=1,\;\forall n\in\N, \forall t>0.
\end{equation}
On the other hand, combining It\^o's Lemma and basic calculations implies that
\begin{equation*}
\begin{aligned}
0&\geq \E\int_\0\ln \dfrac{\bar I_n(t,x)+\eps}{1+\bar I_n(t,x)}dx\geq \int_\0 \ln\dfrac{I_0(x)+\eps}{1+I_0(x)}dx+\hat Rt
\\&\quad\quad-\int_0^{t}\E\int_\0 \Big(\dfrac{\alpha(x) \bar I_n(s,x)}{\bar S_n(s,x)+\bar I_n(s,x)}+\dfrac{\alpha(x) \bar S_n(s,x)\bar I_n(s,x)}{\big(\bar S_n(s,x)+\bar I_n(s,x)\big)\big(\bar I_n(s,x)+1\big)}\Big)dxds
\\&\quad\quad-\int_0^t\E\int_\0 \dfrac{\alpha(x)\eps}{\bar I_n(s,x)+\eps}dxds,\;\forall t>0,n\in \N, 0<\eps<1.
\end{aligned}
\end{equation*}
Thus, $\forall t> 0, n\in\N$, $0<\eps<1$
\begin{equation}\label{p-e0}
\begin{aligned}
\int_0^t \E\int_\0 &\Big(\dfrac{\alpha(x)\bar I_n(s,x)}{\bar S_n(s,x)+\bar I_n(s,x)}+\dfrac{\alpha(x)\bar S_n(s,x)\bar I_n(s,x)}{\big(\bar S_n(s,x)+\bar I_n(s,x)\big)\big(\bar I_n(s,x)+1\big)}\Big)dxds\\
\geq& \E\int_\0 \ln\dfrac{I_0(x)+\eps}{1+I_0(x)}dx+\hat Rt-\abs{\alpha}_E\int_0^t \E\int_\0 \dfrac{\eps}{\bar I_n(s,x)+\eps}dxds.
\end{aligned}
\end{equation}
Let $\eps\to 0$ and using \eqref{I>0a.e} and \eqref{p-e0} we have
\begin{equation}\label{p-e1}
\begin{aligned}
\int_0^t \E\int_\0 &\Big(\dfrac{\alpha(x)\bar I_n(s,x)}{\bar S_n(s,x)+\bar I_n(s,x)}+\dfrac{\alpha(x)\bar S_n(s,x)\bar I_n(s,x)}{\big(\bar S_n(s,x)+\bar I_n(s,x)\big)\big(\bar I_n(s,x)+1\big)}\Big)dxds\\
\geq& \int_\0\ln \dfrac{I_0(x)}{1+I_0(x)}dx+\hat Rt,\;\forall t>0,n\in\N.
\end{aligned}
\end{equation}
We have the following estimates:
\begin{equation*}
\begin{aligned}
\abs{\alpha}_E\left(\E\int_\0 \dfrac{\bar I_n^2(s,x)}{\big(1+\bar I_n(s,x)\big)^2}dx\right)^{\frac12}
&\geq \E\int_\0\dfrac{\alpha(x)\bar I_n(s,x)}{1+\bar I_n(s,x)}dx\\
&\geq\E \int_\0 \dfrac{\alpha(x)\bar S_n(s,x)\bar I_n(s,x)}{\big(\bar S_n(s,x)+\bar I_n(s,x)\big)\big(\bar I_n(s,x)+1\big)}dx,
\end{aligned}
\end{equation*}
and
$$
\begin{aligned}
\abs{\alpha}_E&\left(\E\int_\0 \dfrac{\bar I_n^2(s,x)}{\big(1+\bar I_n(s,x)\big)^2}dx\right)^{\frac12}\left(\E \int_\0\left(\dfrac 1{\bar S_n(s,x)}+1\right)^2dx\right)^{\frac12}\\
&\geq
 \E\int_\0\dfrac{\alpha(x)\bar I_n(s,x)}{1+\bar I_n(s,x)}\left(\dfrac1{\bar S_n(s,x)}+1\right)dx
 \geq
 \E\int_\0\dfrac{\alpha(x)\bar I_n(s,x)}{\bar S_n(s,x)+\bar I_n(s,x)}dx,
\end{aligned}
$$
since $$\frac{1+I}{S+I}=\frac{1}{S+I}+\frac{I}{S+I}\leq \frac1S+1.$$
Therefore, after some basic estimates, we can get from \eqref{p-e1} that
\begin{equation*}
\begin{aligned}
\int_4^t \abs{\alpha}_E&\left(\E\int_0 \dfrac{\bar I_n^2(s,x)}{\big(1+\bar I_n(s,x)\big)^2}dx\right)^{\frac12}\left(1+\left(\E \int_\0\left(\dfrac 1{\bar S_n(s,x)}+1\right)^2dx\right)^{\frac12}\right)ds\\
\geq&
\int_4^t \E\int_\0 \Big(\dfrac{\alpha(x)\bar I_n(s,x)}{\bar S_n(s,x)+\bar I_n(s,x)}+\dfrac{\alpha(x)\bar S_n(s,x)\bar I_n(s,x)}{\big(\bar S_n(s,x)+\bar I_n(s,x)\big)\big(\bar I_n(s,x)+1\big)}\Big)dxds\\
\geq&
\int_\0\ln \dfrac{I_0(x)}{1+I_0(x)}dx+\hat R t- 8\abs{\alpha}_E,
\end{aligned}
\end{equation*}
which together with \eqref{1/sn^2} leads to
\begin{equation*}
\begin{aligned}
\int_4^t \abs{\alpha}_E&\left(\E\int_0 \dfrac{\bar I_n^2(s,x)dx}{\big(1+\bar I_n(s,x)\big)^2}\right)^{\frac12}\left(2\sqrt{e^{-s}\ell_0}+2\wdt K_2^{\frac 12}+3\right) ds\\
\geq&
 \int_\0\ln \dfrac{I_0(x)}{1+I_0(x)}dx-8\abs{\alpha}_E+\hat R t.
\end{aligned}
\end{equation*}
Letting $n\to\infty$ yields
\begin{equation}\label{i/1+i}
\begin{aligned}
\int_4^t \abs{\alpha}_E&\left(\E\int_\0 \dfrac{I^2(s,x)}{\big(1+I(s,x)\big)^2}dx\right)^{\frac12}\left(2\sqrt{e^{-s}\ell_0}+2\wdt K_2^{\frac 12}+3\right) ds\\
\geq& \int_\0\ln \dfrac{I_0(x)}{1+I_0(x)}dx-8\abs{\alpha}_E+\hat R t,
\end{aligned}
\end{equation}
which is easily followed by
\begin{equation*}
\begin{aligned}
\liminf_{t\to\infty}\dfrac1t\int_0^t \left(\E\int_\0 \dfrac{I^2(s,x)}{\big(1+I(s,x)\big)^2}dx\right)^{\frac12}ds
\geq \dfrac{\hat R}{\abs{\alpha}_E\big(2\wdt K_2^{\frac 12}+3\big)}.
\end{aligned}
\end{equation*}
As a consequence,
\begin{equation*}
\liminf_{t\to\infty}\dfrac1t\int_0^t\left(\E\int_\0 \big(I^2(s,x)\wedge 1\big) dx\right)^{\frac12}ds\geq R_I>0,
\end{equation*}
where $R_I$ is independent of initial points. The proof of the theorem is completed by using dense property of $D(A_i^E)$ in $E$ and continuous dependence on initial data of the solution. In more detailed, since constants $\wdt K_2,\hat R$ are independents of initial points, the estimates \eqref{1/sn^2} and \eqref{i/1+i} still hold for the solution starting from arbitrary initial points $S_0,I_0\in E$ with $\int_\0 -\ln I_0(x)dx<\infty.$
\end{proof}

\begin{thm}\label{extinction}
For any nonnegative initial data $S_0,I_0\in E$, if \begin{equation}\label{eq:min}(\mu_2-\alpha)_*= \inf_{x\in\bar\0}\big(\mu_2(x)-\alpha(x)\big)>0,\end{equation}
then the infected class will be extinct with exponential rate.
\end{thm}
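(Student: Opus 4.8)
The plan is to prove the stronger quantitative statement that the expected total infected mass $\E\int_\0 I(t,x)\,dx$ decays exponentially, which immediately yields extinction in the mean. The crucial structural observation is that, since the positive mild solution satisfies $S(t,x),I(t,x)\ge 0$ by Lemma \ref{pos}, the nonlinear incidence term obeys
$$\frac{\alpha(x)S(t,x)I(t,x)}{S(t,x)+I(t,x)}=\alpha(x)I(t,x)\cdot\frac{S(t,x)}{S(t,x)+I(t,x)}\le \alpha(x)I(t,x).$$
Hence the drift of the $I$-equation is pointwise bounded above by $-\big(\mu_2(x)-\alpha(x)\big)I(t,x)\le -(\mu_2-\alpha)_*\,I(t,x)$; that is, under hypothesis \eqref{eq:min} the process $I$ is dominated by a linear equation with a strictly negative rate.

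Exactly as in the proof of Theorem \ref{permanence}, It\^o's formula is not directly available for the mild solution, so I would first work with the finite-dimensional-noise strong solutions $\bar I_n$ of \eqref{StrongS_n} for initial data $S_0,I_0\in D(A_i^E)$, and apply It\^o's formula to the linear functional $u\mapsto\int_\0 u(x)\,dx$. Because this functional is linear there is no second-order correction term: the diffusion contribution $\int_\0 A_2\bar I_n\,dx$ vanishes by the Neumann property $\int_\0 (A_i u)(x)\,dx=0$ recalled in Section \ref{sec:pre}, and the stochastic contribution $\sum_{k=1}^n\sqrt{a_{k,2}}\big(\int_\0 e_k\bar I_n\,dx\big)dB_{k,2}$ is a mean-zero martingale. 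Combining this with the domination bound above gives
$$\frac{d}{dt}\,\E\int_\0\bar I_n(t,x)\,dx\le -(\mu_2-\alpha)_*\,\E\int_\0\bar I_n(t,x)\,dx,$$
so Gronwall's inequality yields $\E\int_\0\bar I_n(t,x)\,dx\le e^{-(\mu_2-\alpha)_* t}\int_\0 I_0(x)\,dx$ for every $n$.

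Next I would pass to the limit $n\to\infty$ using the convergence $\E\abs{I(t)-\bar I_n(t)}_H^2\to 0$ established in Section \ref{sec:longtime}. Since $\abs{\0}=1$, the Cauchy--Schwarz inequality makes $u\mapsto\int_\0 u\,dx$ continuous from $H$ to $\R$, whence $\E\int_\0\bar I_n(t,x)\,dx\to\E\int_\0 I(t,x)\,dx$ and therefore
$$\E\int_\0 I(t,x)\,dx\le e^{-(\mu_2-\alpha)_* t}\int_\0 I_0(x)\,dx.$$
This is exponential extinction of the infected class, and in particular it forces $\limsup_{t\to\infty}\frac1t\int_0^t\E\int_\0 I(s,x)\,dx\,ds=0$, i.e.\ extinction in the mean. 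The extension from $S_0,I_0\in D(A_i^E)$ to arbitrary nonnegative $S_0,I_0\in E$ follows from the density of $D(A_i^E)$ in $E$ together with the continuous dependence on initial data established in Theorem \ref{existence}, noting that the decay rate $(\mu_2-\alpha)_*$ is independent of the initial data.

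I expect the core estimate to be short once the domination $\frac{\alpha SI}{S+I}\le\alpha I$ is in hand; the only genuinely delicate points are the justification of the $n\to\infty$ limit (which reuses the approximation machinery already set up in Section \ref{sec:longtime}) and the final approximation of general initial data by elements of $D(A_i^E)$, both of which are routine given the earlier results.
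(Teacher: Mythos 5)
Your proof is correct, but it takes a genuinely different (and heavier) route than the paper's. The paper never invokes It\^o's formula or the finite-dimensional-noise approximants $\bar I_n$ for this theorem: it defines the mass functional $Ju:=\int_\0 u(x)\,dx$ and applies $J$ \emph{directly to the mild formulation} of the $I$-equation, using the Neumann mass-conservation property $J e^{tA_2}u=Ju$ (a consequence of $\int_\0 (A_iu)(x)dx=0$) together with the behavior of $J$ under stochastic convolutions (\cite[Proposition 4.15]{Prato}). Because $J$ is linear and invariant under the semigroup, the mild solution identity collapses to an exact scalar semimartingale equation for $\int_\0 I(t,x)dx$, valid for \emph{arbitrary} nonnegative initial data in $E$; taking expectations and using the same domination $\frac{\alpha SI}{S+I}\le\alpha I$ you use, the Dini-derivative estimate and exponential decay follow at once, with no $n\to\infty$ limit and no density-of-$D(A_i^E)$ step. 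Your approach instead transplants the approximation machinery that the paper reserves for Theorems \ref{permanence} and \ref{extinction2}, and it does work: the linear functional kills the second-order It\^o correction, the martingale term is integrable since the solution lies in $L^p\big(\Omega;C([0,T],C(\bar\0,\R^2))\big)$, and your limit passage via Cauchy--Schwarz (using $\abs{\0}=1$) and the final density argument are sound. The only point to tighten is positivity: you cite Lemma \ref{pos}, which gives nonnegativity of the mild solution $(S,I)$, but your domination bound is applied to the approximants $(\bar S_n,\bar I_n)$ of \eqref{StrongS_n}, whose nonnegativity requires a separate (if analogous) justification --- the same implicit step the paper takes in the permanence proof, and one the paper's direct argument for extinction sidesteps, since there the bound is applied to $(S,I)$ itself. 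In short, both proofs rest on the same two structural facts (mass conservation under the Neumann semigroup and $\frac{\alpha SI}{S+I}\le\alpha I$) and give the same rate $(\mu_2-\alpha)_*$; the paper's route buys directness and generality of initial data for free, while yours buys nothing extra here but correctly anticipates the technique the paper actually needs for the nonlinear functionals in Theorems \ref{permanence} and \ref{extinction2}.
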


\begin{proof}
 First, we define the linear operator $J:H \mapsto
\R$ as following
$$\forall u\in H, Ju:=\int_\0 u(x)dx.$$
By the properties of $e^{tA_i}$,
 $\forall u\in H, J(e^{tA_i}u-u)=0$ or $Ju=Je^{tA_i}u, \forall i=1,2.$

Now, as in the definition of mild solution, we have
$$I(t)=e^{tA_2}I_0+\int_0^te^{(t-s)A_2}\Big(-\mu_2I(s)+\dfrac{\alpha S(s)I(s)}{S(s)+I(s)}\Big)ds+\int_0^t e^{(t-s)A_2}I(s)dW_2(s).$$
Hence, applying
the operator $J$ to both sides, using the properties of operator $J$ and stochastic convolution (see \cite[Proposition 4.15]{Prato}), we obtain
\begin{equation*}
\begin{aligned}
\int_\0I(t,x)dx=&\int_\0I_0(x)dx+\int_0^t \int_\0\Big(-\mu_2(x) I(s,x)+\dfrac{\alpha(x) S(s,x)I(s,x)}{S(s,x)+I(s,x)}\Big)dxds
\\&\quad+\int_0^t J(e^{(t-s)A_2}I(s))dW_2(s),
\end{aligned}
\end{equation*}
where $J(e^{(t-s)A_2}I(s))$ in the stochastic integral is understood as the process taking values in spaces of linear operator from $H$ to $\R$, that is defined by $$J(e^{(t-s)A_2}I(s))u:=\int_\0 \Big(e^{(t-s)A_2}I(s)u\Big)(x)dx\quad\forall u\in H.$$
Since \eqref{condition}, it is easy to see that these integrals are well-defined.
By taking the expectation on both sides and using the properties of stochastic integral \cite[Proposition 2.9]{Curtain},
\begin{equation*}
\begin{aligned}
\E \int_\0I(t,x)dx&=\int_\0I_0(x)dx+\E\int_0^t\int_\0\Big(-\mu_2(x) I(s,x)+\dfrac{\alpha(x) S(s,x)I(s,x)}{S(s,x)+I(s,x)}\Big)dxds
\end{aligned}
\end{equation*}
As a consequence,
\begin{equation}
\begin{aligned}
\E \int_\0I(t,x)dx-\E \int_\0 I(s,x)dx&=\int_s^t\E\int_\0\Big(-\mu_2(x) I(r,x)+\dfrac{\alpha(x) S(r,x)I(s,x)}{S(r,x)+I(r,x)}\Big)dxdr
\\&\leq -(\mu_2-\alpha)_*\int_s^t\E\int_\0 I(r,x)dxdr
\end{aligned}
\end{equation}
Hence, we can obtain the following estimate for the upper Dini derivative
\begin{equation*}
\dfrac{d}{dt^+}\E\int_\0I(t,x)dx\leq -(\mu_2-\alpha)_* \E\int_\0I(t,x)dx,\;\forall t\geq 0.
\end{equation*}
Since $(\mu_2-\alpha)_*>0$, we can get that $\E\int_\0I(t,x)dx$ converges to 0 with exponential rate as $t\to \infty$. Hence, it easy to claim that the infected class goes extinct.
\end{proof}

\begin{thm}\label{extinction2}
Suppose that $W_2(t)$ is a space-independent Brownian motion
with covariance $a_2 t$.
For any nonnegative initial data $S_0,I_0\in E$, if \begin{equation}\label{eq:min2}(\mu_2-\alpha)_*
+\frac{a_2}2:=\inf_{x\in\bar\0}\big(\mu_2(x)-\alpha(x)\big)
+\frac{a_2}2>0,\end{equation}
then when $p>0$ be sufficiently small that
$$R_p:=(\mu_2-\alpha)_*+\frac{(1-p)a_2}2>0,$$
we have
$$\limsup_{t\to\infty}\dfrac{\ln \E\left(\int_\0 I(t,x)dx\right)^p}t\leq -pR_p<0.$$

\end{thm}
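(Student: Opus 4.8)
The plan is to project the infinite-dimensional dynamics of $I$ onto the scalar total-mass process $Y(t):=\int_\0 I(t,x)\,dx=JI(t)$, where $Ju:=\int_\0 u(x)\,dx$ is the integral functional from the proof of Theorem~\ref{extinction}, and then to estimate $\E Y(t)^p$ by a one-dimensional It\^o--Gronwall argument. Applying $J$ to the mild formulation of $I$ and using the integral-preserving identity $Je^{tA_2}=J$ (established in the proof of Theorem~\ref{extinction}), the drift functional reduces exactly as there, and the stochastic convolution collapses: writing the space-independent noise as $W_2(t)=\sqrt{a_2}\,B(t)\,\1$ with $B$ a standard scalar Brownian motion and $\1$ the constant function (a unit vector since $\abs{\0}=1$), one has $J\big(e^{(t-s)A_2}\,I(s)\,\1\big)=JI(s)=Y(s)$, so that $J\int_0^t e^{(t-s)A_2}I(s)\,dW_2(s)=\sqrt{a_2}\int_0^t Y(s)\,dB(s)$. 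Thus $Y$ is a genuine real-valued continuous semimartingale obeying
\[
dY(t)=D(t)\,dt+\sqrt{a_2}\,Y(t)\,dB(t),\qquad D(t):=\int_\0\Big(-\mu_2(x)I(t,x)+\dfrac{\alpha(x)S(t,x)I(t,x)}{S(t,x)+I(t,x)}\Big)dx .
\]
Because $\frac{S}{S+I}\le1$ and $I\ge0$, the drift satisfies the pointwise bound $D(t)\le-(\mu_2-\alpha)_*\,Y(t)$.

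It is worth stressing where the space-independence hypothesis enters. The reduction of the drift and the vanishing of the elapsed-time dependence in the convolution hold for general $W_2$ thanks to $Je^{tA_2}=J$; what space-independence buys is that the martingale part is exactly $\sqrt{a_2}\int_0^t Y(s)\,dB(s)$, with quadratic variation $d\langle Y\rangle_t=a_2\,Y(t)^2\,dt$. For a general spatial covariance the quadratic variation would read $\sum_k a_{k,2}\langle I(t),e_k\rangle^2\,dt$, which does not close in terms of $Y$; the single-mode structure is precisely what produces the multiplicative scalar equation above.

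Next I would apply the classical finite-dimensional It\^o formula to $Y$. As $y\mapsto y^p$ is not $C^2$ at the origin for $0<p<1$, I would first regularize and apply It\^o to $(Y+\eps)^p$, $\eps>0$. Using $(dY)^2=a_2Y^2\,dt$, taking expectations (the local-martingale term vanishes since $(Y+\eps)^{p-1}\le\eps^{p-1}$ is bounded and $\E\int_0^T Y^2\,ds<\infty$, because $Y(s)\le\abs{I(s)}_E$ with $I\in L^q(\Omega;C([0,T],C(\bar\0,\R^2)))$ for every $q$), and integrating in time gives
\[
\E(Y(t)+\eps)^p=\E(Y_0+\eps)^p+\int_0^t\Big(p\,\E\big[(Y+\eps)^{p-1}D\big]+\tfrac12 p(p-1)a_2\,\E\big[(Y+\eps)^{p-2}Y^2\big]\Big)ds .
\]
I would then bound the first integrand with $D\le-(\mu_2-\alpha)_*Y$, note that $(Y+\eps)^{p-2}Y^2\uparrow Y^p$ monotonically as $\eps\downarrow0$, and pass to the limit $\eps\downarrow0$ by monotone/dominated convergence (the residual term $p(\mu_2-\alpha)_*\eps\,\E(Y+\eps)^{p-1}$ is $O(\eps^p)$ and vanishes). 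This produces the integral inequality
\[
\E Y(t)^p\le Y_0^{\,p}-pR_p\int_0^t\E Y(s)^p\,ds,\qquad R_p=(\mu_2-\alpha)_*+\tfrac{(1-p)a_2}{2}.
\]

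Finally, since $Y_0=\int_\0 I_0(x)\,dx<\infty$ because $I_0\in E$, Gronwall's inequality yields $\E Y(t)^p\le Y_0^{\,p}\,e^{-pR_p t}$; taking logarithms, dividing by $t$, and passing to $\limsup_{t\to\infty}$ gives the assertion, which is strictly negative because $R_p>0$ once $p$ is small (indeed $R_p\to(\mu_2-\alpha)_*+a_2/2>0$ as $p\downarrow0$). I expect the only genuine obstacle to be the first step: making rigorous both the collapse of the stochastic convolution to $\sqrt{a_2}\int_0^t Y\,dB$ and the assertion that $Y$ is thereby a bona fide one-dimensional semimartingale to which the ordinary It\^o formula applies. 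Once this is secured the rest is routine; should the direct semimartingale route prove delicate, an alternative is to run the It\^o step on the finite-dimensional strong approximations $\bar S_n,\bar I_n$ from the proof of Theorem~\ref{permanence} and then let $n\to\infty$, exactly as done there.
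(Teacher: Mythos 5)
Your proof is correct, and it justifies the key It\^o step by a genuinely different route than the paper. The paper first argues, using the space-independence of $W_2$, that for $I_0\in D(A_2^E)$ the mild solution is also a strong solution, applies the infinite-dimensional It\^o formula directly to $\left(\int_\0 I(t,x)\,dx\right)^p$, and then extends the resulting exponential bound to all $0\le I_0\in E$ by density of $D(A_2^E)$ and the continuous dependence on initial data from Theorem \ref{existence}. You instead stay entirely with the mild solution and project it by the mass functional $J$, exactly as in the proof of Theorem \ref{extinction}: the identity $Je^{tA_2}=J$ collapses the drift convolution, and --- the one place the space-independence is really used, as you correctly isolate --- the stochastic convolution collapses to $\sqrt{a_2}\int_0^t Y(s)\,dB(s)$ because the noise has a single constant spatial mode, so $Y(t)=\int_\0 I(t,x)\,dx$ is an honest scalar continuous semimartingale and the classical one-dimensional It\^o formula applies. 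This buys two things the paper glosses over: you avoid the somewhat delicate claim that the mild solution is strong (and with it the whole density/continuous-dependence step, since your argument accepts arbitrary $0\le I_0\in E$ directly), and your regularization $(Y+\eps)^p$ repairs the fact that $y\mapsto y^p$ is not $C^2$ at $0$ for $0<p<1$, to which the paper applies It\^o without comment. The analytic core is the same in both proofs: the drift bound $D(t)\le -(\mu_2-\alpha)_*Y(t)$ and the It\^o correction $-\tfrac12p(1-p)a_2Y^p$ (the paper's displayed $+p(1-p)\tfrac{a_2}{2}$ is a sign typo; its subsequent inequality uses the correct sign), which together yield the rate $-pR_p$.

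One small repair is needed at the very end. The single integral inequality $\E Y(t)^p\le Y_0^p-pR_p\int_0^t\E Y(s)^p\,ds$ does \emph{not} by itself imply exponential decay via Gronwall: with a negative rate, a nonnegative function can satisfy this one-sided inequality from $0$ and still exhibit late excursions of height close to $Y_0^p$ (take $f$ vanishing except on a narrow bump near a large time $T$; the constraint is then essentially $f\le Y_0^p$ there). But your It\^o computation in fact gives the identity between any pair of times, hence $\E Y(t)^p-\E Y(s)^p\le -pR_p\int_s^t\E Y(r)^p\,dr$ for all $0\le s\le t$; this two-point form (equivalently the upper Dini-derivative inequality $\frac{d}{dt^+}\E Y(t)^p\le -pR_p\,\E Y(t)^p$, which is what the paper invokes) does yield $\E Y(t)^p\le Y_0^p e^{-pR_p t}$, and the stated $\limsup$ bound follows. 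With that one-line adjustment your argument is complete.
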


\begin{proof}
Since $W_2(t)$ is a space-independent Brownian motion, as the arguments in proof of Theorem \ref{permanence}, the mild solution $I(t)$ is also the solution in the strong sense if $I_0\in D(A_i^E)$. Hence, with initial value in $D(A_i^E)$, we have
$$
\int_\0 I(t,x)=\int_0^t\int_\0\Big(-\mu_2(x) I(s,x)+\dfrac{\alpha(x) S(s,x)I(s,x)}{S(s,x)+I(s,x)}\Big)dxds +\int_0^t\int_\0 I(s,x)dW_2(s)
$$
By It\^o's formula, we obtain that
$$
\begin{aligned}
\Big(&\int_\0 I(t,x)dx\Big)^p\\
&= \int_s^t\left( p\Big(\int_\0 I(r,x)dx\Big)^{p-1}\int_\0\Big(-\mu_2(x) I(r,x)+\dfrac{\alpha(x) S(r,x)I(r,x)}{S(r,x)+I(r,x)}\Big)dx\right)dr\\
& +\int_s^t p(1-p)\frac{a_2}2  \left(\int_\0I(r,x)dx\right)^pdr + \int_s^t\left(\int_\0I(r,x)dx\right)^pdW_2(r)\\
&\leq -pR_p \int_s^t \left(\int_\0I(r,x)dx\right)^pdr + \int_s^t\left(\int_\0I(r,x)dx\right)^pdW_2(r).
\end{aligned}
$$
Since $\E \left(\int_\0I(t,x)dx\right)^p<\infty$,
we have
$$
\E \left(\int_\0 I(t,x)dx\right)^p= \E\left(\int_\0 I(s,x)dx\right)^p-pR_p \int_s^t \E\left(\int_\0I(r,x)dx\right)^pdr
$$
which easily derives that
$$\dfrac{d}{dt^+} \E \left(\int_\0 I(t,x)dx\right)^p\leq -pR_p  \E \left(\int_\0 I(t,x)dx\right)^p.$$
An application of the differential inequality shows
\begin{equation}\label{exp-p}
\E \left(\int_\0 I(t,x)dx\right)^p \leq e^{-pR_pt} \left(\int_\0 I(0,x)dx\right)^p,
\end{equation}
for any $t\geq 0$ and initial values in $D(A_i^E)$.
Since $D(A_i^E)$ is dense in $E$, \eqref{exp-p} holds for each fixed $t$ and any initial values in $E$.
Then the desired result can be obtained.
\end{proof}

\section{An Example}
\label{example}
In this section, to demonstrate our results, we consider an example when the processes driving noise processes in
equation \eqref{eq} are standard Brownian motions
and the recruitment rate, the
death rates, the infection rate, and the recovery rate are independent of space variable. Precisely, we consider the following equation
\begin{equation}\label{eq-1}
\begin{cases}
dS(t,x)=\Big[k_1\Delta S(t,x)+\Lambda-\mu_1S(t,x)- \dfrac{\alpha S(t,x)I(t,x)}{S(t,x)+ I(t,x)}\Big]dt \\
\hspace{7.5cm}+ \sigma_1S(t,x)dB_1(t)\quad
 \text{in }\ \R^+\times\0, \\[1ex]
dI(t,x)=\Big[k_2\Delta I(t,x)-\mu_2I(t,x) +\dfrac{\alpha S(t,x)I(t,x)}{S(t,x)+ I(t,x)}\Big]dt\\
\hspace{7.5cm}+\sigma_2 I(t,x)dB_2(t)\quad\text{in } \ \R^+\times\0,\\[1ex]
\partial_{\nu}S(t,x)=\partial_{\nu}I(t,x)=0\quad\quad\quad\quad\quad\quad\quad\text{in} \;\;\;\;\R^+\times\partial\0,\\
S(x,0)=S_0(x),I(x,0)=I_0(x)\quad\quad\quad\quad\;\text{in} \;\;\;\;\bar\0,
\end{cases}
\end{equation}
where $\Lambda,\mu_1,\mu_2,\alpha$ are positive constants, and $B_1(t)$, $B_2(t)$ are independent standard Brownian motions.
As we obtained above, for any initial values $0\leq S_0,I_0\in E$,
\eqref{eq-1} has unique positive mild solution $S(t,x),I(t,x)\geq 0.$ Moreover, the long-time behavior of the system is shown as the following theorem.
\begin{thm}\label{longtimeexample}
Let $S(t,x),I(t,x)$ be the positive mild solution $($in fact also in the strong sense$)$ of equation \eqref{eq-1}.
\begin{itemize}
\item[{\rm(i)}] For any non-negative initial values $S_0,I_0\in E$, if $\alpha<\mu_2+\dfrac {\sigma_2^2}2$, then the infected individual is extinct.
\item[{\rm(ii)}] For the initial values $0\leq S_0,I_0\in E$ satisfy $$\int_\0-\ln I_0(x)dx<\infty.$$ If  $\alpha>\mu_2+\dfrac {\sigma_2^2}2$, then the infected class is permanent.
\end{itemize}
\end{thm}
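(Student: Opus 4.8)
The plan is to recognize equation \eqref{eq-1} as a special case of the general system \eqref{eq1} with constant coefficients and space-independent driving noises, and then to invoke Theorem \ref{permanence} and Theorem \ref{extinction2} directly. The key bookkeeping step is to match the scalar noises $\sigma_i\,dB_i(t)$ with the infinite-dimensional Wiener processes $W_i(t)=\sum_k\sqrt{a_{k,i}}B_{k,i}(t)e_k$. Since $|\0|=1$, the constant function $e_1\equiv 1$ is a legitimate normalized element of an orthonormal basis of $H$, and choosing $a_{1,i}=\sigma_i^2$, $a_{k,i}=0$ for $k\ge 2$, and $B_{1,i}=B_i$ yields $W_i(t)=\sigma_i B_i(t)$, which is space-independent with $a_i=\sum_k a_{k,i}=\sigma_i^2$. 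In particular $W_2$ is a space-independent Brownian motion with covariance $a_2t=\sigma_2^2 t$, exactly the hypothesis required by Theorem \ref{extinction2}. Existence, uniqueness and positivity of the mild solution are supplied by Theorem \ref{existence}.

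For part (i), I would apply Theorem \ref{extinction2}. Because all coefficients are constant, $(\mu_2-\alpha)_*=\mu_2-\alpha$, so the hypothesis $(\mu_2-\alpha)_*+\tfrac{a_2}2>0$ becomes $\mu_2-\alpha+\tfrac{\sigma_2^2}2>0$, i.e.\ $\alpha<\mu_2+\tfrac{\sigma_2^2}2$, which is exactly the assumption of (i). Moreover $R_p=(\mu_2-\alpha)_*+\tfrac{(1-p)a_2}2\to\mu_2-\alpha+\tfrac{\sigma_2^2}2>0$ as $p\downarrow 0$, so $R_p>0$ for all sufficiently small $p>0$; thus Theorem \ref{extinction2} yields the exponential decay $\E\big(\int_\0 I(t,x)\,dx\big)^p\le e^{-pR_p t}\big(\int_\0 I_0(x)\,dx\big)^p$ of a $p$-th moment. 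I would then read off extinction from this bound: it is the natural noise-induced extinction statement in the regime $\mu_2\le\alpha<\mu_2+\tfrac{\sigma_2^2}2$, where the first moment need not vanish, and a Markov-inequality plus Borel--Cantelli argument, summing $\PP\big(\int_\0 I(n,x)\,dx>\eta\big)\le\eta^{-p}e^{-pR_p n}\big(\int_\0 I_0\,dx\big)^p$ over integers $n$, upgrades it to $\int_\0 I(t,x)\,dx\to 0$ almost surely.

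For part (ii), I would apply Theorem \ref{permanence}. Its two hypotheses are immediate here: $\Lambda_*=\Lambda>0$ since $\Lambda$ is a positive constant, and, using $|\0|=1$ together with $a_2=\sigma_2^2$,
\[
\hat R=\int_\0\alpha\,dx-\int_\0\mu_2\,dx-\frac{a_2}2=\alpha-\mu_2-\frac{\sigma_2^2}2>0
\]
is exactly the assumption $\alpha>\mu_2+\tfrac{\sigma_2^2}2$ of (ii). The standing requirement $\int_\0-\ln I_0(x)\,dx<\infty$ on the initial datum is assumed in the statement, so all conditions of Theorem \ref{permanence} are met and the infected class is permanent, with a permanence constant $R_I>0$ independent of the initial data.

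The main obstacle is not analytic but organizational: one must verify carefully that the scalar-noise example genuinely embeds in the general framework, in particular that activating only the single constant mode forces $W_i$ to be space-independent and gives the identification $a_i=\sigma_i^2$, so that the space-independence hypothesis of Theorem \ref{extinction2} is legitimately available for part (i). Once this correspondence is in place, both assertions are direct specializations of the general longtime results, and the only remaining point is the routine translation of the exponential $p$-th-moment decay in Theorem \ref{extinction2} into the paper's notion of extinction.
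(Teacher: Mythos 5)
Your proposal is correct and is essentially the paper's own (implicit) argument: Theorem \ref{longtimeexample} is stated without a separate proof, being exactly the specialization of Theorems \ref{permanence} and \ref{extinction2} that you carry out, via the identification $e_1\equiv 1$ (legitimate since $\abs{\0}=1$), $a_{1,i}=\sigma_i^2$, $a_{k,i}=0$ for $k\geq 2$, so that $a_i=\sigma_i^2$, $(\mu_2-\alpha)_*=\mu_2-\alpha$, and $\hat R=\alpha-\mu_2-\tfrac{\sigma_2^2}{2}$. The one caveat is that your closing Borel--Cantelli upgrade goes beyond what the paper asserts --- Theorem \ref{extinction2}'s conclusion is only exponential decay of the $p$-th moment, which is how the paper reads ``extinct'' in (i) --- and, as sketched, it would only give $\int_\0 I(n,x)\,dx\to 0$ along integer times unless you add a maximal-inequality (or supermartingale) control of the oscillation on each interval $[n,n+1]$; this extra step is unnecessary for the theorem as the paper intends it.
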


\begin{rem}
As in  Theorem \ref{longtimeexample}, the sufficient condition for permanence is almost necessary condition. It is similar to the result for SIS reaction-epidemic model, which is shown in \cite[Theorem 1.2]{Peng}.
\end{rem}

\section{Concluding Remarks}\label{sec:con}
 Being possibly among one of the first papers working on spatially inhomogeneous stochastic partial differential equation epidemic models,
we hope that our effort will provide some insights for subsequent study and investigation.
For possible future study, we mention the following topics.
\begin{itemize}
 \item First,  there is a growing interest to use the so-called regime-switching stochastic models in various applications; see \cite{YinZ} for the treatment of switching diffusion models, in which both continuous dynamics and discrete events coexist. Such switching diffusion models have gained popularity with applications range from networked control systems to financial engineering.  For instance, in a financial market model, one may use the random switching process to model the mode of the market (bull and bear). Such a random switching process can be built into the SPDE models considered here.
     The switching is used to reflect different random environment that are not reflected from the SPDE part of the model.

 \item Second, instead of systems driven by Brownian motions, we may consider systems driven by L\'evy process; some recent work can be seen in \cite{BYY17}. One could work with SPDE models driven by L\'evy processes. The recent work on switching jump diffusions \cite{CCTY} may also be adopted to the SPDE models.

\item Finally, in terms of the mathematical development, various estimates about longtime properties were
 given in average norm although the solution is in the better space $E$. Our effort in the future will be to obtain stochastic regularity of the solution by using the methods in \cite{Br,van1,van2} so that it is possible to provide estimates in the sup-norm ($\abs{\cdot}_E$). Nevertheless, some mathematical details need to carefully worked out.  The result
  in turn, will be of interests for people working on real data.
  Some other properties such as strictly positivity of the solutions and
  sharper conditions for extinction and permanence are worthy of consideration.
\end{itemize}

\subsection*{Acknowledement} We are grateful to the editors and reviewer for the evaluation. Our special
thanks go to the reviewer for the detailed comments  and suggestions
on an earlier version of the manuscript, which have much improved the paper. The research of D. Nguyen was supported in part by the National Science Foundation under grant  DMS-1853467. The research of N. Nguyen and G. Yin
was supported in part by the Army Research Office under grant W911NF-19-1-0176.

The ms is accepted for publication by the Applied Probability Trust (http://www.appliedprobability.org) in Journal of Applied Probability 57.2 (June 2020).

\end{document}